\newtheorem{thm}{Theorem}[section]
\newtheorem{lm}[thm]{Lemma}
\newtheorem{cor}[thm]{Corollary}
\newtheorem{prop}[thm]{Proposition}
\theoremstyle{remark}
\theoremstyle{remark}
\newtheorem{exm}[thm]{Example}
\DeclareMathOperator{\Ker}{\rm ker}
\DeclareMathOperator{\End}{\rm End}
\newcommand{\N}{\mathcal{N}}
\newcommand{\Ac}{\mathcal{A}}
\newcommand{\Kc}{\mathcal{K}}
\newcommand{\Dc}{\mathcal{D}}
\newcommand{\Nc}{\mathcal{N}}
\newcommand{\Fc}{\mathcal{F}}
\newcommand{\Pc}{\mathcal{P}}
\newcommand{\Ic}{\mathcal{I}}
\newcommand{\Gc}{\mathcal{G}}
\newcommand{\Z}{\mathbb{Z}}
\newcommand{\Q}{\mathbb{Q}}
\newcommand{\Cc}{\mathcal{C}}
\newcommand{\M}{\mathcal{M}}
\newcommand{\Sc}{\mathcal{S}}
\newcommand{\Tc}{\mathcal{T}}
\newcommand{\onu}{\overline{\nu}}
\newcommand{\omu}{\overline{\mu}}
\newcommand{\opi}{\overline{\pi}}
\newcommand{\orho}{\overline{\rho}}
\newcommand{\Hom}{\mbox{Hom}}
\DeclareMathOperator{\Add}{{\rm Add}_\mathcal{A}}
\subjclass[2000]{16D10 (16S50)}
\keywords{additive category, Ab5 category, autocompact object}
\thanks{This work is part of the project SVV-2020-260589}
\begin{document}
\title{Autocompact objects of Ab5 categories}
\author{Josef Dvo\v r\'ak}
\address{CTU in Prague, FEE, Department of mathematics, Technick\'a 2, 166 27 Prague 6 \&
MFF UK, Department of Algebra,  Sokolovsk\' a 83, 186 75 Praha 8, Czech Republic} 
\email{pepa.dvorak@post.cz}

\author{Jan \v Zemli\v cka}
\address{Department of Algebra, Charles University,
	Faculty of Mathematics and Physics Sokolovsk\' a 83, 186 75 Praha 8, Czech Republic} 
\email{zemlicka@karlin.mff.cuni.cz}

\begin{abstract}
The aim of the paper is to describe autocompact objects in Ab5-categories, i.e. objects in cocomplete abelian categories with exactness preserving filtered colimits of exact sequences, whose covariant Hom-functor commutes with copowers of the object itself. A characterization of non-autocompact object is given, a general criterion of autocompactness of an object via the structure of its endomorphism ring is presented and a criterion of autocompactness of products is proven.
\end{abstract}
\date{\today}
\maketitle

\section*{Introduction}

An object $C$ of an abelian category $\Ac$ closed under coproducts is said to be {\it autocompact}, if the corresponding covariant hom-functor $\Ac(C,-)$ with target category being the category of abelian groups commutes with coproducts $C^{(\kappa)}$ for all cardinals $\kappa$, i.e. there is a canonical abelian group isomorphism between objects $\Ac(C,C^{(\kappa)})$ and $\Ac(C,C)^{(\kappa)}$. Note that it generalizes the profoundly treated notion of compact objects whose covariant hom-functors commute with arbitrary coproducts.

A systematic study of compact objects in categories of modules began in late 60's with Hyman Bass remarking in \cite[p.54]{Bass68} that the class of compact modules extends the class of finitely generated ones. This observation was elaborated in the work of Rudolf Rentschler \cite{Ren69}, where he presented basic constructions and conditions of existence of infinitely generated compact modules. The attention to autocompact objects within the category of abelian groups was then attracted by the work \cite{A-M}. The later research was motivated mainly by progress in the structural theory of abelian groups  \cite{ABW,Br03,BS12} and modules \cite{AB,BZ,Mo19}.
Although the notions of compactness and autocompactness were in fact studied in various algebraic contexts and with heterogeneous motivation 
(structure of modules \cite{Head,EklGooTrl97}, graded rings \cite{GomMilNas94}, representable equivalences of module categories \cite{ColMen93}, the structure of almost free modules \cite{Tr95}),
their overall categorial nature was omitted for a long time. Nevertheless, there have been several recent papers dedicated to the description of compactness in both non-abelian \cite{Mo10,DZ20} and abelian \cite{KZ19} categories published. 

The present paper follows the undertaking begun with \cite{KZ19} and its main goal is not only to survey results concerning self-small abelian groups and modules from the standpoint of abelian categories, but it tries to deepen and extend some of them in a way that they could be applied back in the algebraic context. We initiate with an investigation of the more general concept of relative compactness.
The second section summarizes some basic tools developed in \cite{KZ19}, which allows for the description of structure and closure properties of relative compactness, in particular, Proposition~\ref{sum-compact} shows that $\bigoplus\M$ is $\bigoplus\Nc$-compact for finite families of objects $\M$ and $\Nc$ of an Ab5-category if and only if $M$ is $N$-compact for all $M\in\M$ and $N\in\Nc$. The third section presents a general criterion of an object to be autocompact via the structure of its endomorphism ring (Theorem~\ref{notAC}) and, as a consequence, a description of autocompact coproducts (Proposition~\ref{finite_sums}). The main result of the paper presented in Theorem~\ref{criterion} which proves that $\prod\M$ is an autocompact object if and only if it is $\bigoplus\M$-compact.

\section{Preliminaries}

A category with a zero object is called \emph{abelian} if the following four conditions are satisfied:
\begin{enumerate}
\item for each finite discrete diagram the product and coproduct exist and they are canonically isomorphic,
\item each $\Hom$-set has a structure of an abelian group such that the composition of morphisms is bilinear,
\item with each morphism it contains its kernel and cokernel, 
\item monomorphisms are kernels of suitable morphisms, while epimorphisms are cokernels of suitable morphisms. 
\end{enumerate}
A category is said to be \emph{complete} (\emph{cocomplete}) if it contains limits (colimits) of all small diagrams;
 a cocomplete abelian category where all filtered colimits of exact sequences preserve exactness is then called an \emph{Ab5} category. 

Any small discrete diagram is said to be a \emph{family}.
Let $\M$ be a family of objects from $\Ac$; then the corresponding coproduct (product) is denoted $(\bigoplus\M,(\nu_M \mid M \in \M))$ ($(\prod\M,(\pi_M \mid M\in\M))$) and $\nu_M$ ($\pi_M$) are called \emph{structural morphisms} of the coproduct (of the product). In case $\M=\{M_i\mid i\in K\}$ with $M_i=M$ for all $i\in K$, where $M$ is an object of $\Ac$, we shall write $M^{(K)}$ ($M^{K}$) instead of $\bigoplus\M$ ($\prod\M$) and the corresponding structural morphisms shall be denoted by $\nu_i:=\nu_{M_i}$
($\pi_i:=\pi_{M_i}$ resp.) for each $i\in K$.

Let $\Nc$ be a subfamily of $\M$. Following the terminology set in \cite{KZ19} the coproduct $(\bigoplus\Nc,(\onu_N \mid N\in\Nc))$ in $\Ac$ is called a \emph{subcoproduct} and dually the product $(\prod\Nc,(\opi_N \mid N\in\Nc))$ is said to be a \emph{subproduct}. 
Recall there exists a unique canonical morphism $\nu_\Nc \in \Ac \left( \bigoplus\Nc, \bigoplus\M \right)$ ($\pi_\Nc \in \Ac \left(\prod\M, \prod\Nc\right)$) given by the universal property of $\bigoplus\Nc$ ($\prod\Nc$) satisfying $\nu_N=\nu_\Nc \circ \onu_N$ ($\pi_N=\opi_N \circ \pi_\Nc$) for each $N\in\Nc$, to which we shall refer as to the \emph{structural morphism} of the subcoproduct (the subproduct) over a subfamily $\Nc$ of $\M$. If $\M=\{M_i\mid i\in K\}$ and $\Nc=\{M_i\mid i\in L\}$ where $M_i=M$ for an object $M$ and for $i$ from index sets $L\subseteq K$, the corresponding structural morphisms are denoted by $\nu_L$
and $\pi_L$ respectively. The symbol $1_M$ denotes the identity morphism of an object $M$ and the phrase \emph{the universal property of a limit (colimit)} refers to the existence of unique morphism into the limit (from the colimit).

For basic properties of introduced notions and unspecified terminology we refer to \cite{Pop73,F}.

Throughout the whole paper we assume that $\Ac$ is an \emph{Ab5} category.

\section{$\Cc$-compact objects}

In order to capture in detail the idea of relative compactness, which is the central notion of this paper, 
let us suppose that $M$ is an object of  the category $\Ac$ and 
$\Nc$ is a family of objects of $\Ac $. Note that the functor 
$\Ac(M,-)$ on any additive category maps into $\Hom$-sets with a structure of abelian groups, which allows for a 
definition of the mapping
\begin{equation*}
\Psi_\Nc: \bigoplus\left( \Ac(M,N) \mid N\in\Nc\ \right) \to \Ac(M,\bigoplus\Nc)
\end{equation*}
by the rule
\begin{equation*}
\Psi_\Nc(\varphi)=\nu_\Fc \circ \nu^{-1} \circ \pi_\Fc \circ \tau
\end{equation*}
where for the element $\varphi=(\varphi_N \mid N\in\Nc)$ of the abelian group $\bigoplus \left( \Ac(M,N) \mid N\in\Nc \right)$ the symbol $\Fc$ denotes the finite family $\{N\in\Nc\mid \varphi_N\ne 0\} $, the morphism $\nu \in \Ac(\bigoplus \Fc, \prod\Fc)$ is the canonical isomorphism and  $\tau \in \Ac(M, \prod \Nc)$ is the unique morphism given by the universal property of the product $(\prod \Nc,(\pi_N \mid N\in\Nc))$ applied on the cone $(M,(\varphi_N \mid  N\in\Nc))$, i.e. $\pi_N \circ \tau=\varphi_N$ for each $N \in \Nc$:
\begin{equation*}
\xymatrix{ 
	M  \ar@{~>}[r]^{\tau}\ar@{->}[dr]_{\varphi_{N}}  & \prod\Nc \ar@{->}[r]^{\pi_{\Fc}} \ar@{->}[d]^{\pi_{N}} & \prod\Fc \ar@{->}[r]^{\nu^{-1}} & \bigoplus \Fc \ar@{->}[r]^{\nu_{\Fc}} & \bigoplus \Nc \\
	& N
}
\end{equation*}

Recall a key observation regarding the algebraic concept of compactness:

\begin{lm}\cite[Lemma 1.3]{KZ19}\label{2.1}
For each family of objects $\Nc \subseteq \Ac$, the mapping $\Psi_\Nc$ is a monomorphism in the category of abelian groups.
\end{lm}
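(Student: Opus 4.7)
My plan is to first observe that $\Psi_\Nc$ is a group homomorphism and then to prove injectivity by peeling off each component of $\varphi$ using a canonical retraction of the coproduct inclusion.

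The additivity of $\Psi_\Nc$ is essentially forced by the universal properties at play: both $\tau$ (given by the cone structure into $\prod\Nc$) and the passage $\nu^{-1}\circ\pi_\Fc$ on a joint enlargement of the supports depend $\Z$-linearly on $\varphi$, while $\nu_\Fc$ is a fixed morphism. I would record this verification briefly (joining the two supports $\Fc_\varphi,\Fc_{\varphi'}$ into a single finite $\Fc$ containing both, so that the additive reasoning takes place in the finite biproduct $\bigoplus\Fc=\prod\Fc$) and move on to the main point.

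For injectivity, fix $\varphi=(\varphi_N\mid N\in\Nc)$ and suppose $\Psi_\Nc(\varphi)=0$. For each $N_0\in\Nc$ the universal property of the coproduct $(\bigoplus\Nc,(\onu_N\mid N\in\Nc))$, applied to the cone into $N_0$ sending $\onu_{N_0}\mapsto 1_{N_0}$ and $\onu_N\mapsto 0$ for $N\ne N_0$, produces a canonical retraction $p_{N_0}\in\Ac(\bigoplus\Nc,N_0)$ of $\onu_{N_0}$. The plan is to show
\begin{equation*}
p_{N_0}\circ \Psi_\Nc(\varphi)=\varphi_{N_0}
\end{equation*}
for every $N_0\in\Nc$, which immediately forces $\varphi=0$.

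To establish this formula one tracks the morphism through the defining composite $\Psi_\Nc(\varphi)=\nu_\Fc\circ \nu^{-1}\circ \pi_\Fc\circ \tau$. If $N_0\notin\Fc$ then $\varphi_{N_0}=0$ and one checks $p_{N_0}\circ\nu_\Fc=0$ by precomposing with each structural morphism $\onu^\Fc_N$ of $\bigoplus\Fc$ and using the defining property of $p_{N_0}$. If $N_0\in\Fc$, the same precomposition argument identifies $p_{N_0}\circ\nu_\Fc$ with the retraction $p_{N_0}^{\Fc}\colon\bigoplus\Fc\to N_0$ of the finite coproduct, and since $\nu\colon\bigoplus\Fc\to\prod\Fc$ is the canonical isomorphism between the finite biproduct's two descriptions, $p_{N_0}^{\Fc}\circ\nu^{-1}=\opi_{N_0}$. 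Composing with $\pi_\Fc\circ\tau$ and using $\opi_{N_0}\circ\pi_\Fc=\pi_{N_0}$ together with $\pi_{N_0}\circ\tau=\varphi_{N_0}$ (which is the defining property of $\tau$) yields the desired equality. The only nontrivial step is the bookkeeping identifying $p_{N_0}\circ\nu_\Fc$ with $p_{N_0}^\Fc$ and the compatibility $p_{N_0}^\Fc\circ\nu^{-1}=\opi_{N_0}$; both reduce to testing against the structural morphisms and invoking the universal properties of $\bigoplus\Fc$ and $\prod\Fc$, so the argument is diagrammatic rather than computational.
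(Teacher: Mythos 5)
Your argument is correct: recovering each component via the canonical retraction $p_{N_0}$ of $\onu_{N_0}$, and checking the identities $p_{N_0}\circ\nu_\Fc=p_{N_0}^\Fc$ (or $0$ when $N_0\notin\Fc$), $p_{N_0}^\Fc\circ\nu^{-1}=\opi_{N_0}$, $\opi_{N_0}\circ\pi_\Fc\circ\tau=\varphi_{N_0}$ by testing against structural morphisms, does yield $p_{N_0}\circ\Psi_\Nc(\varphi)=\varphi_{N_0}$ and hence injectivity; your remark on verifying additivity over a common finite support is also the right way to handle the $\varphi$-dependence of $\Fc$. Note that the paper itself gives no proof of this lemma --- it is imported verbatim from \cite[Lemma 1.3]{KZ19} --- so there is nothing in this document to compare against, but your retraction argument is the standard one and is exactly what the cited source carries out.
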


Let $M$ be an object and $\Cc$ a class of objects of the category $\Ac$. In accordance with \cite{KZ19}, $M$ is called \emph{$\Cc$-compact} if $\Psi_\Nc$ is an isomorphism for each family $\Nc \subseteq\Cc$.
For objects $M,N \in Ac$ we say that $M$ is {\it $N$-compact} (or relatively compact over $N$)
if it is an $\{N\}$-compact object and $M$ is said to be \emph{autocompact}
whenever it is $M$-compact.

\begin{exm}\label{ex1} (1) If $M$ and $N$ are objects such that $\Ac(N,M)=0$, then $N$ is $M$-compact object, in particular $\mathbb Q$ is a $\mathbb Z$-compact abelian group. 

(2) Self-small right modules over a unital associative ring, in particular finitely generated ones, are autocompact objects in the category of all right modules. 
\end{exm}

Let us formulate an elementary but useful observation:

\begin{lm}\label{inclusion-comp}
Let $M$ be an object and let $\mathcal{B}\subseteq\Cc$ be families of objects of the category $\Ac$. If $M$ is $\Cc$-compact, then it is $\mathcal{B}$-compact.
\end{lm}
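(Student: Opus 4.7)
The plan is essentially to unwind the definition of $\Cc$-compactness. By definition, $M$ is $\Cc$-compact precisely when $\Psi_\Nc$ is an isomorphism for every family $\Nc \subseteq \Cc$, and to show $\Bc$-compactness we must verify the analogous statement for every family $\Nc \subseteq \Bc$.

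The key observation is simply that set-theoretic inclusion is transitive: any family $\Nc$ of objects with $\Nc \subseteq \Bc$ automatically satisfies $\Nc \subseteq \Cc$ by the hypothesis $\Bc \subseteq \Cc$. The construction of $\Psi_\Nc$ from the excerpt depends only on the family $\Nc$ itself (via the canonical morphisms $\nu_\Fc$, $\pi_\Fc$ associated to finite subfamilies $\Fc$ of $\Nc$), and not on the ambient class, so the map $\Psi_\Nc$ is the same object whether one regards $\Nc$ as a subfamily of $\Bc$ or of $\Cc$. Therefore the assumed isomorphism property transfers directly.

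Concretely, I would start by fixing an arbitrary family $\Nc \subseteq \Bc$; note $\Nc \subseteq \Cc$; invoke $\Cc$-compactness of $M$ to conclude that $\Psi_\Nc$ is an isomorphism; and then apply the definition of $\Bc$-compactness. There is no obstacle and no computation to perform --- the statement is a pure formality about the quantifier structure of the definition. The only reason it is worth recording as a named lemma is for convenient citation in later arguments where one wishes to restrict the class of test objects.
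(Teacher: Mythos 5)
Your argument is correct and is exactly the intended one: the paper states this lemma without proof, regarding it as immediate from the quantifier structure of the definition, and your unwinding (every $\Nc\subseteq\Bc$ is a family $\Nc\subseteq\Cc$, and $\Psi_\Nc$ depends only on $\Nc$ itself) is the complete argument. Nothing further is needed.
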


We shall need two basic structural observations concerning
the category $\Ac$ formulated in \cite{KZ19}, which express relationship between coproducts and products using their structural morphisms.
For the convenience of the reader we quote both of the results, the first one is formulated for the special case of products coproducts of copies of $M$, while the second one is kept in the original form.
 
\begin{lm}\cite[Lemma 1.1]{KZ19}\label{Morph} 
Let $M$ be an object of $\Ac$ and $L\subseteq K$ be sets. If $\Ac$ contains products $(M^L,(\overline{\pi}_i \mid i\in L))$ and $(M^K,(\pi_i \mid i\in K))$, then
\begin{enumerate}
\item There exist unique morphisms $\rho_L \in \Ac(M^{(K)}, M^{(L)})$ and $\mu_L \in \Ac(M^{L}, M^{K})$ such that 
$\rho_L \circ \nu_i=\onu_i$, $\pi_i \circ \mu_L=\opi_i$ for $i\in L$, and $\rho_L \circ \nu_i=0$, $\pi_i \circ \mu_L = 0$ for $i\notin L$.  
\item For each $i\in K$ there exist unique morphisms 
$\rho_i \in \Ac(M^{(K)}, M)$ and  $\mu_i \in \Ac(M, M^{K})$ such that $\rho_i \circ \nu_i=1_{M}$, $\pi_i \circ \mu_i=1_M$ and $\rho_i \circ \nu_j=0$, $\pi_j \circ \mu_i=0$ whenever $i\ne j$. Denoting by $\orho_i$ and $\omu_i$ the corresponding morphisms for $i \in L$, we have $\mu_L \circ \omu_j = \mu_j$ and $\rho_L \circ \orho_j = \rho_j$ for all $j \in L$.
\item There exists a unique morphism $t \in \Ac(M^{(K)}, M^{K})$
such that $\pi_i \circ t=\rho_i$ and $t \circ \nu_i = \mu_i$ for each $i\in K$.  
\end{enumerate}
\end{lm}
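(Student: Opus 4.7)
The proof is essentially a bookkeeping exercise with universal properties of the relevant (co)products; no Ab5 hypothesis is needed beyond the mere existence of the diagrams involved. For part (1), I would invoke the universal property of the coproduct $M^{(K)}$ applied to the cocone consisting of $\onu_i : M \to M^{(L)}$ for $i \in L$ together with the zero morphism $M \to M^{(L)}$ for each $i \in K \setminus L$; this yields the unique $\rho_L \in \Ac(M^{(K)}, M^{(L)})$ satisfying the stated equations. Dually, the cone given by $\opi_i : M^L \to M$ for $i \in L$ and zero for $i \in K \setminus L$ furnishes $\mu_L$ via the universal property of the product $M^K$.

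Part (2) then follows by specializing (1) to the singleton $L = \{i\}$, identifying $M^{(\{i\})} \cong M \cong M^{\{i\}}$; the morphisms $\rho_i, \mu_i$ and their defining equations are exactly what (1) produces. To verify the compatibility $\mu_L \circ \omu_j = \mu_j$ for $j \in L$, I would compose on the left with $\pi_k$ for arbitrary $k \in K$ and split into cases: for $k \in L$ this gives $\opi_k \circ \omu_j$, which is $1_M$ if $k=j$ and $0$ otherwise, while for $k \notin L$ (so in particular $k \ne j$) it gives $0$. In either case $\pi_k \circ (\mu_L \circ \omu_j) = \pi_k \circ \mu_j$ for every $k \in K$, so uniqueness in the product universal property of $M^K$ forces the identity. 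The dual computation with coproducts and $\nu_k$ settles $\rho_L \circ \orho_j = \rho_j$.

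For part (3), I would construct $t$ directly via the universal property of the product $M^K$: the family $(\rho_i : M^{(K)} \to M \mid i \in K)$ is a cone over the discrete diagram defining $M^K$, producing a unique $t \in \Ac(M^{(K)}, M^K)$ with $\pi_i \circ t = \rho_i$ for all $i \in K$. To recover the second identity $t \circ \nu_i = \mu_i$, I would compose with $\pi_j$ and compute
\begin{equation*}
\pi_j \circ t \circ \nu_i \;=\; \rho_j \circ \nu_i \;=\; \begin{cases} 1_M & \text{if } j=i, \\ 0 & \text{if } j\ne i, \end{cases}
\end{equation*}
which coincides with $\pi_j \circ \mu_i$ for every $j \in K$; uniqueness in the product universal property of $M^K$ (with target $M$) then yields $t \circ \nu_i = \mu_i$.

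The only obstacle I anticipate is purely notational: one must carefully distinguish $\pi_i, \nu_i$ (the structural morphisms of $M^K, M^{(K)}$) from $\opi_i, \onu_i$ (those of $M^L, M^{(L)}$), and keep in mind that $\opi_k \circ \omu_j$ is meaningful only for indices $k \in L$. Once indices are tracked with care, every step of the argument reduces to a mechanical application of the appropriate universal property.
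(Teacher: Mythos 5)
Your proof is correct. The paper offers no proof of this lemma---it is quoted from \cite[Lemma 1.1]{KZ19}---and your argument is the expected one: each morphism is obtained from the universal property of the relevant (co)product applied to the family of structural morphisms padded with zero maps, and each identity is then checked by composing with all structural morphisms and appealing to uniqueness. One small point worth making explicit rather than burying in ``the dual computation'': as printed, $\rho_L \circ \orho_j$ does not typecheck, since $\orho_j$ has codomain $M$ rather than $M^{(K)}$; the intended identity is $\orho_j \circ \rho_L = \rho_j$, which is exactly what your verification against the $\nu_k$, $k\in K$, establishes.
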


\begin{lm}\cite[Lemma 1.1(i) and 1.2]{KZ19}\label{StructMorph}
Let $\Nc\subseteq \M$ be families of objects of $\Ac$ and let there exist products $(\prod \Nc,(\overline{\pi}_N \mid N\in\Nc))$ and $(\prod \M,(\pi_N \mid N\in\M))$ in $\Ac$.
\begin{enumerate}
\item There exist unique morphisms $\rho_\Nc \in \Ac(\bigoplus\M, \bigoplus\Nc)$ and $\mu_\Nc \in \Ac(\prod\Nc, \prod\M)$ such that 
$\rho_\Nc \circ \nu_\Nc=1_{\bigoplus\Nc}$,
$\pi_\Nc \circ \mu_\Nc=1_{\prod\Nc}$
and $\rho_\Nc \circ \nu_M=0$, $\pi_M \circ \mu_\Nc = 0$  for each $M\notin\Nc$.
\item There exist unique morphisms $\overline{t} \in \Ac\left( \bigoplus \Nc, \prod \Nc \right)$ and $t \in \Ac\left( \bigoplus \M, \prod \M \right)$ 
such that $\pi_N \circ t=\rho_N$ and $t \circ \nu_N = \mu_N$ for each $N\in \M$, $\opi_N \circ \overline{t}=\orho_N$ and $\overline{t} \circ \onu_N = \omu_N$ for each $N\in \Nc$. Furthermore, the diagram
\begin{equation*}
\xymatrix{ 
	\bigoplus\N  \ar@{->}[r]^{\nu_\Nc} \ar@{->}[d]_{\overline{t}} & \bigoplus\M \ar@{->}[r]^{\rho_\Nc}\ar@{->}[d]^{t} & \bigoplus\N  \ar@{->}[d]^{\overline{t}} \\
	\prod\Nc \ar@{->}[r]^{\mu_\Nc}  & \prod \M\ar@{->}[r]^{\pi_\Nc}  & \prod\N
}
\end{equation*}
commutes.

\item Let $\kappa$ be an ordinal and let $(\Nc_{\alpha} \mid \alpha<\kappa)$ be a disjoint partition of $\M$. For $\alpha < \kappa$ set $S_\alpha:= \bigoplus \Nc_{\alpha}$, $P_{\alpha}:=\prod \Nc_{\alpha}$ and denote families of the corresponding limits and colimits as $\Sc: = (S_{\alpha} \mid \alpha < \kappa)$, $\Pc: = (P_{\alpha} \mid \alpha < \kappa)$. Then $\bigoplus\M \simeq \bigoplus{\Sc}$ and $\prod\M \simeq \prod{\Pc}$ where both isomorphisms are canonical, i.e. for each object $M\in\M$ the following diagrams commute:
\begin{equation*}
\xymatrix{
	M  \ar@{->}[r]^{\nu^{(\alpha)}_M} \ar@{->}[d]_{\nu_M} & S_{\alpha} \ar@{->}[d]^{\nu_{S_{\alpha}}} \\
	\bigoplus \M  \ar@{~>}[r]^{\simeq }  & \bigoplus \Sc  } 
	\ 
	\xymatrix{
	\prod \Pc \ar@{~>}[r]^{\simeq} \ar@{->}[d]_{\pi_{P_{\alpha}}} & \prod \M \ar@{->}[d]^{\pi_M} \\
	P_{\alpha} \ar@{->}[r]^{\pi^{(\alpha)}_M} & M}
\end{equation*}
\end{enumerate}
\end{lm}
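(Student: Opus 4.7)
The plan is to exploit the universal properties of products and coproducts throughout; all three parts reduce to routine applications, and the work is mainly bookkeeping.

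For part (1), to construct $\rho_\Nc$ I apply the universal property of $\bigoplus \M$ to the cocone $(\alpha_M \colon M \to \bigoplus \Nc)_{M \in \M}$ defined by $\alpha_M = \onu_M$ if $M \in \Nc$ and $\alpha_M = 0$ otherwise; this yields the unique morphism satisfying $\rho_\Nc \circ \nu_M = \onu_M$ for $M \in \Nc$ and $\rho_\Nc \circ \nu_M = 0$ for $M \notin \Nc$. To verify $\rho_\Nc \circ \nu_\Nc = 1_{\bigoplus \Nc}$, I precompose both sides with $\onu_N$ for each $N \in \Nc$; using $\nu_\Nc \circ \onu_N = \nu_N$, both reduce to $\onu_N$, so the equality follows from the universal property of $\bigoplus \Nc$. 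The construction of $\mu_\Nc$ and the identity $\pi_\Nc \circ \mu_\Nc = 1_{\prod \Nc}$ are formally dual, using the universal property of $\prod \M$ on the cone whose components are $\opi_M$ for $M \in \Nc$ and $0$ otherwise.

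For part (2), I first apply part (1) to singleton subfamilies (or invoke Lemma \ref{Morph}) to obtain the auxiliary morphisms $\rho_N, \mu_N$ for $N \in \M$ and $\orho_N, \omu_N$ for $N \in \Nc$. The morphism $t \in \Ac(\bigoplus \M, \prod \M)$ is then the unique one produced by the universal property of $\prod \M$ from the cone $(\rho_N)_{N \in \M}$ on $\bigoplus \M$, so $\pi_N \circ t = \rho_N$ by construction. The identity $t \circ \nu_N = \mu_N$ is checked by postcomposing with $\pi_M$ for arbitrary $M \in \M$: both $\pi_M \circ t \circ \nu_N = \rho_M \circ \nu_N$ and $\pi_M \circ \mu_N$ equal $1_N$ when $M = N$ and $0$ otherwise. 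The construction of $\overline{t}$ is analogous within $\Nc$. The commutativity of the left square $t \circ \nu_\Nc = \mu_\Nc \circ \overline{t}$ is verified by applying $\pi_M \circ (-) \circ \onu_N$ for arbitrary $M \in \M$ and $N \in \Nc$; a short case analysis on whether $M \in \Nc$, combined with the defining identities of $\rho_M$, $\mu_\Nc$, and $\omu_N$, shows both sides coincide. The right square $\overline{t} \circ \rho_\Nc = \pi_\Nc \circ t$ is handled symmetrically, by composing with $\nu_M$ on the right and $\opi_N$ on the left.

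For part (3), I construct the canonical morphism $\bigoplus \M \to \bigoplus \Sc$ via the universal property of $\bigoplus \M$: for each $M \in \M$ there is a unique index $\alpha$ with $M \in \Nc_\alpha$, and the cocone assigns the component $\nu_{S_\alpha} \circ \nu_M^{(\alpha)}$. The candidate inverse is obtained from the universal property of $\bigoplus \Sc$ by assigning to each $\alpha$ the structural morphism $\nu_{\Nc_\alpha} \colon S_\alpha \to \bigoplus \M$ produced in part (1). That the two compositions are identities follows once more by universality, verified on the structural morphisms $\nu_M$ and $\nu_{S_\alpha} \circ \nu_M^{(\alpha)}$. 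The isomorphism $\prod \M \simeq \prod \Pc$ is entirely dual, using the universal property of each product to define morphisms in both directions. The main obstacle is not conceptual but notational: one must keep careful track of which universal property is being invoked at each step and check the square identities coordinate by coordinate, since a direct pointwise argument is not available in an abstract Ab5 category.
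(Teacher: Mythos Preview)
Your proof is correct and follows the standard route via universal properties. Note that the paper does not actually prove this lemma: it is quoted verbatim from \cite[Lemma~1.1(i) and 1.2]{KZ19} without argument, so there is no in-paper proof to compare against. Your approach is the expected one and is in all likelihood the same as in the cited source.
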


Morphisms $\rho_{L}$, $\rho_\Nc$,  ($\mu_{L}$, $\mu_\Nc$) from Lemma~\ref{Morph}(1) and Lemma~\ref{StructMorph}(1) are called the \emph{associated morphisms} to the structural morphisms $\nu_{L}$, $\nu_\Nc$ ($\pi_{L}$, $\pi_\Nc$) over the subcoproduct (the subproduct) of $M$. The unique morphism $t$ from Lemma~\ref{StructMorph}(2) is said to be the \emph{compatible coproduct-to-product} morphism. Note that in an Ab5-category $t$ is a monomorphism by \cite[Chapter 2, Corollary 8.10]{Pop73} and if $K$ is finite, it is by definition an isomorphism.

We translate now a general criteria \cite[Lemma 1.4, Theorem 1.5]{KZ19} of categorial $\Cc$-compactness to the description of $N$-compactness for an arbitrary object $N$:

\begin{thm}\label{N-comp}  
The following conditions are equivalent for objects $M$ and $N$ of the category $\Ac$:
\begin{enumerate}
\item $M$ is $N$-compact,
\item for each cardinal $\kappa$ and $f \in \Ac(M,  N^{(\kappa)})$ there exists a finite set $F \subset \kappa$
and a morphism $f' \in \Ac(M, N^{(F)})$ such that  $f=\nu_F \circ f'$.
\item for each cardinal $\kappa$ and $f \in \Ac(M,  N^{(\kappa)})$ there exists a finite set $F \subset \kappa$ such that  $f = \sum\limits_{\alpha\in F} \nu_\alpha \circ \rho_\alpha \circ f$,
\item for each morphism $\varphi \in \Ac(M,N^{(\omega)})$ there exists $\alpha<\omega$ such that $\rho_\alpha \circ \varphi= 0$.
\item there exists a family of $N$-compact objects $\Gc$ and
an epimorphism $e \in \Ac(\bigoplus\Gc, M)$ such that for each 
countable family $\Gc_\omega \subseteq \Gc$ there exists 
 a non-$N$-compact object $F$ and morphism $f\in\Ac(F,M)$
 such that $f^c \circ e \circ \nu_{\Gc_\omega}= 0$ for the cokernel $f^c$ of $f$.
\end{enumerate}
\end{thm}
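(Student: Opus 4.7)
\emph{Proof plan.} The cycle will be $(1)\Leftrightarrow(2)\Leftrightarrow(3)\Rightarrow(4)\Rightarrow(2)$, together with $(1)\Leftrightarrow(5)$ obtained by direct specialization of the ambient $\Cc$-compactness criterion of~\cite[Theorem~1.5]{KZ19} to $\Cc=\{N\}$.

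For $(1)\Leftrightarrow(2)$ I unpack $\Psi_\Nc$ in the case $\Nc=\{N_i\mid i\in K\}$ with each $N_i=N$: tracing the defining diagram, an element $\varphi$ with finite support $\Fc$ is sent to the morphism $M\to N^{(K)}$ obtained by composing the finite coproduct--product isomorphism with $\nu_\Fc$, so the image of $\Psi_\Nc$ is exactly the set of morphisms that factor through a finite subcoproduct. Since $\Psi_\Nc$ is already monic by Lemma~\ref{2.1}, its surjectivity is equivalent to (2). The equivalence $(2)\Leftrightarrow(3)$ is the standard abelian manipulation: the decomposition $1_{N^{(F)}}=\sum_{\alpha\in F}\onu_\alpha\circ\orho_\alpha$ converts a factorization $f=\nu_F\circ f'$ into the expression in (3) after noting that $\rho_\alpha\circ\nu_F=\orho_\alpha$ for $\alpha\in F$ (Lemma~\ref{Morph}), while the expression in (3) manifestly factors through $\nu_F$.

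For $(3)\Rightarrow(4)$, apply (3) with $\kappa=\omega$: for the resulting finite $F\subset\omega$ and any $\alpha\in\omega\setminus F$ one has $\rho_\alpha\circ\varphi=\sum_{\beta\in F}(\rho_\alpha\circ\nu_\beta)\circ\rho_\beta\circ\varphi=0$, since $\rho_\alpha\circ\nu_\beta=0$ whenever $\alpha\neq\beta$. The crux is $(4)\Rightarrow(2)$. Given $f\in\Ac(M,N^{(\kappa)})$, put $F:=\{\alpha\in\kappa\mid\rho_\alpha\circ f\neq0\}$. If $F$ were infinite, an injection $\omega\hookrightarrow F$, $n\mapsto\alpha_n$, would produce $g:=\rho_{\{\alpha_n\}}\circ f\in\Ac(M,N^{(\omega)})$ with $\orho_n\circ g=\rho_{\alpha_n}\circ f\neq0$ for every $n$, contradicting (4). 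Thus $F$ is finite and I claim $f=\nu_F\circ\rho_F\circ f$. A direct coordinate calculation---using $\rho_\alpha\circ\nu_F=\orho_\alpha$ for $\alpha\in F$ and $=0$ otherwise, together with $\orho_\alpha\circ\rho_F=\rho_\alpha$---yields $\rho_\alpha\circ h=0$ for every $\alpha\in\kappa$, where $h:=f-\nu_F\circ\rho_F\circ f$. The compatible coproduct-to-product morphism $t:N^{(\kappa)}\to N^\kappa$ is monic in the Ab5-category $\Ac$ by~\cite[Ch.~2, Cor.~8.10]{Pop73}, and since $\pi_\alpha\circ t=\rho_\alpha$ by Lemma~\ref{StructMorph}(2), vanishing of every $\rho_\alpha\circ h$ forces $t\circ h=0$ by the universal property of the product, whence $h=0$; this is exactly (2).

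Finally, $(1)\Leftrightarrow(5)$ is obtained by instantiating the categorial criterion of~\cite[Theorem~1.5]{KZ19} at the one-element class $\Cc=\{N\}$ and checking that the transcription of the condition there is literally (5). The main obstacle is the bookkeeping in $(4)\Rightarrow(2)$, specifically keeping the various structural and associated morphisms $\nu_\alpha$, $\nu_F$, $\rho_\alpha$, $\rho_F$, $\orho_\alpha$ straight, and invoking the $t$-monomorphism in the right way to transport coordinatewise vanishing back to the morphism itself.
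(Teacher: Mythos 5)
Your proof is correct, but it takes a different route from the paper in the sense that the paper's ``proof'' of Theorem~\ref{N-comp} is a two-line citation: $(1)\Leftrightarrow(2)\Leftrightarrow(3)$ are attributed to \cite[Lemma 1.4]{KZ19} and $(1)\Leftrightarrow(4)\Leftrightarrow(5)$ to \cite[Theorem 1.5]{KZ19}, with no argument given. You instead supply a self-contained derivation of $(1)$--$(4)$ (identifying the image of $\Psi_\Nc$ with the morphisms factoring through finite subcoproducts, the finite biproduct decomposition $1_{N^{(F)}}=\sum_{\alpha\in F}\onu_\alpha\orho_\alpha$, and the support-is-finite argument via an injection $\omega\hookrightarrow F$), and you only cite \cite[Theorem 1.5]{KZ19} for $(1)\Leftrightarrow(5)$ --- exactly where the paper also defers, and reasonably so, since condition (5) is not re-derivable from the other four without reproducing the substantial argument of that theorem. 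What your version buys is transparency about what the specialization to $\Cc=\{N\}$ actually uses; what it costs is that your step $(4)\Rightarrow(2)$ (and the surjectivity comparison in $(1)\Leftrightarrow(2)$) invokes the compatible coproduct-to-product morphism $t:N^{(\kappa)}\to N^{\kappa}$, hence implicitly assumes the product $N^{\kappa}$ exists; an Ab5 category as defined here is cocomplete but not assumed complete, and the theorem statement carries no product-existence hypothesis. This is consistent with the paper's own practice (the definition of $\Psi_\Nc$ already presupposes $\prod\Nc$, and the monicity of $t$ is quoted from \cite[Ch.~2, Cor.~8.10]{Pop73}), so it is a caveat to record rather than a gap, but you should state it. One small notational point: the identity you need from Lemma~\ref{Morph}(2) is $\orho_j\circ\rho_L=\rho_j$ (the composite $\rho_L\circ\orho_j$ as printed there does not typecheck); you use it in the correct order.
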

\begin{proof} Equivalences 
$(1) \Leftrightarrow (2) \Leftrightarrow (3)$ follow immediately from 
\cite[Lemma 1.4]{KZ19}, while $(1) \Leftrightarrow (4)\Leftrightarrow (5)$ are consequences of 
\cite[Theorem 1.5]{KZ19}
\end{proof}

\section{Correspondences of compact objects}

As the base step of our research we describe $C$-compact objects for a single object $C$ of an Ab5 category $\Ac$. Let us begin with the observation that we can study $C$-compactness of a suitable object instead of the compactness over a set of objects.

Let us denote the class 
\[
\Add(\Cc)=\{A\mid \exists B, \exists \kappa, \forall \alpha<\kappa, \exists C_\alpha\in\Cc: A\oplus B\cong \bigoplus_{\alpha<\kappa}C_\alpha \}
\]
for every family $\Cc$ of objects of $\Ac$ and put $\Add(C):=\Add(\{C\})$.

\begin{lm}\label{Autocompact3} 
The following conditions are equivalent for an object $M$ and a set of objects $\Cc$ of the category $\Ac$:
\begin{enumerate}
\item $M$ is $\bigoplus\Cc$-compact,
\item $M$ is $\Cc$-compact,
\item $M$ is $\Add(\bigoplus\Cc)$-compact,
\item $M$ is $\Add(\Cc)$-compact.
\end{enumerate}
\end{lm}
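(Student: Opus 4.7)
First I would note that $\Add(\Cc)=\Add(\bigoplus\Cc)$: every copower of $\bigoplus\Cc$ is visibly a coproduct of objects from $\Cc$, while Lemma~\ref{StructMorph}(1) exhibits each $C\in\Cc$ as a direct summand of $\bigoplus\Cc$, so that every coproduct of $\Cc$-objects is in turn a direct summand of a suitable copower of $\bigoplus\Cc$. Combined with the trivial containments $\Cc\subseteq\Add(\Cc)$ and $\{\bigoplus\Cc\}\subseteq\Add(\bigoplus\Cc)$, Lemma~\ref{inclusion-comp} immediately yields $(4)\Leftrightarrow(3)$, $(3)\Rightarrow(1)$, and $(4)\Rightarrow(2)$, reducing the task to proving $(1)\Rightarrow(2)$ and $(2)\Rightarrow(4)$.

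Both of the outstanding implications follow the same splitting scheme. For $(1)\Rightarrow(2)$, I take a family $\Nc=\{N_\alpha\mid\alpha\in K\}$ of objects of $\Cc$ and assemble the structural morphisms $\sigma_\alpha\colon N_\alpha\to\bigoplus\Cc$ together with their associated retractions $r_\alpha\colon\bigoplus\Cc\to N_\alpha$ from Lemma~\ref{StructMorph}(1), via the universal properties of $\bigoplus\Nc$ and $(\bigoplus\Cc)^{(K)}$, into a split monomorphism $\iota\colon\bigoplus\Nc\to(\bigoplus\Cc)^{(K)}$ with retraction $\pi$ satisfying the key property that $\pi$ sends the $\alpha$-th structural copy of $\bigoplus\Cc$ through $r_\alpha$ into the single block $N_\alpha$ of $\bigoplus\Nc$. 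Given $f\in\Ac(M,\bigoplus\Nc)$, hypothesis $(1)$ and Theorem~\ref{N-comp}(2) let me factor $\iota\circ f$ through some $(\bigoplus\Cc)^{(F)}$ with $F\subseteq K$ finite, and the block-preservation of $\pi$ guarantees that $f=\pi\circ\iota\circ f$ factors through the finite subcoproduct $\bigoplus_{\alpha\in F}N_\alpha$ of $\bigoplus\Nc$; together with Lemma~\ref{2.1} this proves $(2)$. The implication $(2)\Rightarrow(4)$ repeats this pattern with the larger ambient coproduct $\bigoplus\Cc'$, where $\Cc'\subseteq\Cc$ is obtained by concatenating the decompositions $N_\alpha\oplus B_\alpha\cong\bigoplus\Cc_\alpha$ witnessing $N_\alpha\in\Add(\Cc)$; the associated retraction $\pi'$ again maps the $(\alpha,\beta)$-th summand of $\bigoplus\Cc'$ into $N_\alpha$, so applying $(2)$ to $\iota'\circ f$ produces a finite subcoproduct of $\Cc'$ whose image under $\pi'$ lies in the finite subcoproduct of $\Nc$ spanned by the finitely many $\alpha$'s present.

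The principal technical obstacle in both halves of the argument is the block-preservation property of the retraction: without it, a finite subcoproduct of the ambient coproduct might be projected onto a morphism whose image spreads across infinitely many $N_\alpha$'s, and the desired factorization of $f$ through a finite subcoproduct of $\Nc$ would collapse. This control is secured by defining $\iota$ and $\pi$ strictly through the universal properties of the coproducts involved, exactly in the style of Lemmas~\ref{Morph} and~\ref{StructMorph}, so that block-preservation holds by construction rather than needing to be verified after the fact.
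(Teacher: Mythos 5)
Your argument is correct and follows essentially the same route as the paper's: both rest on the identity $\Add(\Cc)=\Add(\bigoplus\Cc)$, on Lemma~\ref{inclusion-comp}, and on a block-diagonal comparison morphism between coproducts through which the finite-subcoproduct factorization criterion (\cite[Lemma 1.4]{KZ19}, i.e.\ Theorem~\ref{N-comp}(2) together with Lemma~\ref{2.1}) is transported. The only cosmetic differences are that you close the cycle via $(1)\Rightarrow(2)$ where the paper obtains $(1)\Rightarrow(3)$ by substituting $\{\bigoplus\Cc\}$ for $\Cc$ in its proof of $(2)\Rightarrow(4)$, and that you use split monomorphisms with block-preserving retractions to push the finite factorization forward, whereas the paper uses bare block-diagonal monomorphisms and pulls finiteness of support back through monicity.
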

\begin{proof} Since $\Add(\bigoplus\Cc)=\Add(\Cc)$, the equivalence
(3)$\Leftrightarrow$(4) is obvious. Implications 
(3)$\Rightarrow$(1) and (4)$\Rightarrow$(2) are clear from Lemma~\ref{inclusion-comp}.

(2)$\Rightarrow$(4) Let $\varphi\in \Ac(M,\bigoplus\mathcal D)$ for a family $\Dc$ of objects of $\Add(\bigoplus\Cc)$. For each $D\in \mathcal D$ there exists a family $\Cc_D$ of objects of $\Cc$ and a monomorphism $\nu_D: D\to \bigoplus\Cc_D$, hence there exists a monomorphism 
$\nu: \bigoplus\mathcal D\to \bigoplus_{D\in \mathcal D}\bigoplus\Cc_D$.
Since $M$ is $\Cc$-compact, the morphism $\nu\varphi$ factorizes through a finite subcoproduct by \cite[Lemma 1.4]{KZ19}, hence $\varphi$
 factorizes through a finite subcoproduct, so $M$ is $\Add(\Cc)$-compact by
\cite[Lemma 1.4]{KZ19} again.

(1)$\Rightarrow$(3) Follows from the implication (2)$\Rightarrow$(4) where we take $\{\bigoplus\Cc\}$ instead of $\Cc$.
\end{proof}

\begin{cor}\label{sum-compact0}   If $\Nc\subseteq \M$ are families of objects such that $\Nc$ contains infinitely many nonzero objects, then $\bigoplus\M$ is not $\Nc$-compact, so it is not $\bigoplus\Nc$-compact.
\end{cor}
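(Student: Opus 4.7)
My plan is to prove the first assertion directly from the definition and to obtain the second from it via Lemma~\ref{Autocompact3}, which gives the equivalence of $\Nc$-compactness and $\bigoplus\Nc$-compactness. Thus it suffices to produce a countable subfamily $\Nc_\omega\subseteq\Nc$ together with a morphism $\varphi\in\Ac(\bigoplus\M,\bigoplus\Nc_\omega)$ that is not in the image of $\Psi_{\Nc_\omega}$, so that $\bigoplus\M$ fails to be $\Nc$-compact already at the subfamily $\Nc_\omega$.

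Since $\Nc$ contains infinitely many nonzero objects, I choose a countably infinite subfamily $\Nc_\omega=\{N_n\mid n<\omega\}$ whose objects are all nonzero. Because $\Nc_\omega\subseteq\Nc\subseteq\M$, the structural morphisms $\nu_{N_n}\in\Ac(N_n,\bigoplus\M)$ and $\onu_{N_n}\in\Ac(N_n,\bigoplus\Nc_\omega)$ are available. The universal property of the coproduct $\bigoplus\M$ then lets me define a morphism $\varphi\in\Ac(\bigoplus\M,\bigoplus\Nc_\omega)$ by prescribing $\varphi\circ\nu_{N_n}=\onu_{N_n}$ for each $n<\omega$ and $\varphi\circ\nu_M=0$ for every $M\in\M\setminus\Nc_\omega$.

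Using the associated retractions $\rho_{N_n}\in\Ac(\bigoplus\Nc_\omega,N_n)$ from Lemma~\ref{StructMorph}(1), I compute
\[
\rho_{N_n}\circ\varphi\circ\nu_{N_n}=\rho_{N_n}\circ\onu_{N_n}=1_{N_n}\ne 0,
\]
so $\rho_{N_n}\circ\varphi\ne 0$ for every $n<\omega$ because $N_n$ is nonzero. If $\varphi$ factored through a finite subcoproduct $\bigoplus_{n\in F}N_n$ via some $\varphi'$ and the structural morphism $\nu_F$, then for every $n\notin F$ we would have $\rho_{N_n}\circ\varphi=\rho_{N_n}\circ\nu_F\circ\varphi'=0$, a contradiction. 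Hence, by the image description of $\Psi_{\Nc_\omega}$ recorded in \cite[Lemma 1.4]{KZ19} (the family-version of the equivalence $(1)\Leftrightarrow(2)$ in Theorem~\ref{N-comp}), $\varphi$ is not in the image of $\Psi_{\Nc_\omega}$, so $\Psi_{\Nc_\omega}$ is not surjective and $\bigoplus\M$ is not $\Nc$-compact.

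The argument is essentially mechanical once the witnessing morphism $\varphi$ is in hand, and I expect no serious obstacle. The only point genuinely requiring the hypotheses is the nonvanishing $N_n\ne 0$, which is exactly what guarantees $\rho_{N_n}\circ\varphi\ne 0$ for all $n$ and hence prevents a finite-support factorization of $\varphi$.
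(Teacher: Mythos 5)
Your proof is correct and follows exactly the route the paper intends (the corollary is left unproved there, being the evident consequence of Lemma~\ref{Autocompact3} together with the observation that the associated morphism onto an infinite subcoproduct of nonzero objects cannot factor through a finite subcoproduct). Your witnessing morphism $\varphi$ is precisely the associated morphism $\rho_{\Nc_\omega}$ of Lemma~\ref{StructMorph}(1), and the reduction to $\bigoplus\Nc$-compactness via Lemma~\ref{Autocompact3} is the right citation.
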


Since $\Add(M)=\Add(M^{(n)})$ for any integer $n$, we have the following consequence:

\begin{cor}\label{M^k} Let $\kappa$ be a cardinal and $M$ an autocompact object.
Then $M^{(\kappa)}$ is autocompact if and only if $\kappa$ is finite. 
\end{cor}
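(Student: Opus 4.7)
My plan is to bridge the notions of autocompactness (i.e., $M^{(\kappa)}$-compactness of $M^{(\kappa)}$) and $M$-compactness of $M^{(\kappa)}$ via Lemma~\ref{Autocompact3}, in the spirit of the remark preceding the statement. Applied to the object $M^{(\kappa)}$ with class $\{M\}$, respectively $\{M^{(\kappa)}\}$, the lemma yields
\[
M^{(\kappa)}\text{ is }M\text{-compact}\ \Longleftrightarrow\ M^{(\kappa)}\text{ is }\Add(M)\text{-compact}\ \Longleftrightarrow\ M^{(\kappa)}\text{ is }M^{(\kappa)}\text{-compact},
\]
because $\Add(M)=\Add(M^{(\kappa)})$ in both the finite and infinite cases. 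So it suffices to prove that, assuming $M$ is autocompact, $M^{(\kappa)}$ is $M$-compact if and only if $\kappa$ is finite.

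For the implication $\kappa$ finite $\Rightarrow$ $M^{(\kappa)}$ is $M$-compact, I would verify condition (3) of Theorem~\ref{N-comp}. Fix a cardinal $\lambda$ and $f\in\Ac(M^{(\kappa)},M^{(\lambda)})$, and denote by $\nu_i\colon M\to M^{(\kappa)}$ ($i<\kappa$) the structural morphisms of $M^{(\kappa)}$ and by $\nu_\alpha^{(\lambda)}$, $\rho_\alpha^{(\lambda)}$ the structural and associated morphisms of $M^{(\lambda)}$. For each $i<\kappa$ the morphism $f\circ\nu_i\in\Ac(M,M^{(\lambda)})$ witnesses autocompactness of $M$, so by Theorem~\ref{N-comp}(3) there is a finite $F_i\subset\lambda$ with
\[
f\circ\nu_i\;=\;\sum_{\alpha\in F_i}\nu_\alpha^{(\lambda)}\circ\rho_\alpha^{(\lambda)}\circ f\circ\nu_i .
\]
Setting $F:=\bigcup_{i<\kappa}F_i$, which is finite because $\kappa$ is, the identities $\rho_\beta^{(\lambda)}\circ\nu_\alpha^{(\lambda)}=\delta_{\alpha\beta}1_M$ give $\rho_\beta^{(\lambda)}\circ f\circ\nu_i=0$ for $\beta\in F\setminus F_i$, so the same relation extends to $F$ for every $i<\kappa$. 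By the universal property of the coproduct $M^{(\kappa)}$, the two morphisms $f$ and $\sum_{\alpha\in F}\nu_\alpha^{(\lambda)}\circ\rho_\alpha^{(\lambda)}\circ f$ agree after precomposition with each $\nu_i$, hence they coincide; this is precisely condition (3) of Theorem~\ref{N-comp}.

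For the converse, assume $\kappa$ infinite (and $M\ne 0$, the case $M=0$ being degenerate). Taking $\M=\Nc=\{M_\alpha\mid\alpha<\kappa\}$ with $M_\alpha=M$, the family $\Nc$ contains infinitely many nonzero objects, so Corollary~\ref{sum-compact0} directly yields that $\bigoplus\M=M^{(\kappa)}$ is not $\bigoplus\Nc=M^{(\kappa)}$-compact, i.e.\ not autocompact.

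The main technical point is the extension of the finite sums in the forward direction; the subtlety is that the index set $F_i$ depends on $i$, so one has to enlarge each $F_i$ to the common finite union $F$ and then invoke $\rho\circ\nu=0$ on the off-diagonal to keep the factorization identity valid. Everything else is immediate from the earlier lemmas and the remark $\Add(M)=\Add(M^{(\kappa)})$.
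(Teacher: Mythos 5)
Your proof is correct and takes essentially the same route as the paper, which obtains the corollary from Lemma~\ref{Autocompact3} via the observation $\Add(M)=\Add(M^{(n)})$ for the finite case and from Corollary~\ref{sum-compact0} for the infinite case. The only cosmetic difference is that you verify ``$M^{(\kappa)}$ is $M$-compact for finite $\kappa$'' by hand through Theorem~\ref{N-comp}(3), where one could simply cite Lemma~\ref{sum}(1); both arguments are sound.
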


The next result shows the correspondence between classes of compact objects over different pairs of objects.

\begin{lm}\label{Autocompact1}  
Let $A,B,M$ be objects of $\Ac$ and let there exist a cardinal $\lambda$ and a monomorphism $\mu\in\Ac(A, B^\lambda)$. 
If $M$ is $B$-compact, then $M$ is $A$-compact.
\end{lm}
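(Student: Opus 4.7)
The plan is to argue the contrapositive via the characterization of compactness given by Theorem~\ref{N-comp}(4). Assume $M$ is not $A$-compact; then there exists $\varphi \in \Ac(M, A^{(\omega)})$ with $\rho_\alpha^A \circ \varphi \neq 0$ for every $\alpha < \omega$. I aim to manufacture a morphism $\psi \in \Ac(M, B^{(\omega)})$ with $\rho_\alpha^B \circ \psi \neq 0$ for all $\alpha < \omega$, which by the same criterion contradicts the assumed $B$-compactness of $M$.

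First I would transfer $\varphi$ through the given monomorphism by composing with $\mu^{(\omega)} \in \Ac(A^{(\omega)}, (B^\lambda)^{(\omega)})$. A short check on the coproduct injections $\nu_\gamma^A$ gives the naturality identity $\rho_\alpha^{B^\lambda} \circ \mu^{(\omega)} = \mu \circ \rho_\alpha^A$ (both sides evaluate to $\delta_{\alpha\gamma}\mu$ on $\nu_\gamma^A$), so injectivity of $\mu$ forces each composite $\rho_\alpha^{B^\lambda} \circ \mu^{(\omega)} \circ \varphi \colon M \to B^\lambda$ to be nonzero. Consequently, for every $\alpha < \omega$ I may choose an index $\beta_\alpha < \lambda$ with $\pi_{\beta_\alpha} \circ \rho_\alpha^{B^\lambda} \circ \mu^{(\omega)} \circ \varphi \neq 0$.

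The universal property of the coproduct $(B^\lambda)^{(\omega)}$ then supplies a unique morphism $\theta \colon (B^\lambda)^{(\omega)} \to B^{(\omega)}$ satisfying $\theta \circ \nu_\alpha^{B^\lambda} = \nu_\alpha^B \circ \pi_{\beta_\alpha}$ for every $\alpha$, and a second routine check on the generators yields $\rho_\alpha^B \circ \theta = \pi_{\beta_\alpha} \circ \rho_\alpha^{B^\lambda}$. Setting $\psi := \theta \circ \mu^{(\omega)} \circ \varphi$, one obtains $\rho_\alpha^B \circ \psi = \pi_{\beta_\alpha} \circ \rho_\alpha^{B^\lambda} \circ \mu^{(\omega)} \circ \varphi \neq 0$ for every $\alpha$, which is the required contradiction.

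The apparent obstacle is that $B^\lambda$ is a product and need not lie in $\Add(B)$, so Lemma~\ref{Autocompact3} cannot be invoked to upgrade $B$-compactness of $M$ to $B^\lambda$-compactness in one stroke; and the naive attempt to collect all coordinate projections at once produces the possibly-infinite union $\bigcup_{\beta<\lambda} F_\beta$ rather than a single finite support set. What rescues the argument is that, to witness non-compactness, we only need one witnessing coordinate $\beta_\alpha$ per individual index $\alpha < \omega$; the diagonal morphism $\theta$ is precisely the device that packages these countably many separate projection choices into a single morphism landing in the coproduct $B^{(\omega)}$, thereby converting a non-compactness witness against $A$ into one against $B$.
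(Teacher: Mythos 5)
Your proposal is correct and follows essentially the same route as the paper's proof: the contrapositive via Theorem~\ref{N-comp}, the choice of a witnessing coordinate $\beta_\alpha<\lambda$ for each $\alpha<\omega$ using that $\mu$ is a monomorphism, and the assembly of the composites $\pi_{\beta_\alpha}\circ\mu$ into a single morphism $A^{(\omega)}\to B^{(\omega)}$ via the universal property of the coproduct. Your factorization through $(B^\lambda)^{(\omega)}$ and the two generator-checked identities are just a slightly different packaging of the paper's direct construction of $\psi$ with $\psi\circ\nu_\alpha=\tilde{\nu}_\alpha\circ\mu_\alpha$.
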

\begin{proof} Denote by $\nu_{\alpha}$ and $\tilde{\nu}_{\alpha}$ the corresponding structural morphisms of coproducts $A^{(\omega)}$ and $B^{(\omega)}$, and by $\rho_{\alpha}$ and $\tilde{\rho}_{\alpha}$ their associated morphisms, respectively.

Suppose that $M$ is not $A$-compact. Then there exists $\varphi \in \Ac(M,A^{(\omega)})$
such that $\rho_\alpha \varphi\ne 0$ for all $\alpha<\omega$ by Theorem~\ref{N-comp}.
Since $\mu$ is a monomorphism by assumption, we get that $\mu\rho_\alpha \varphi\ne 0$, which implies that there 
exists $\beta_\alpha<\lambda$ such that 
$\pi_{\beta_\alpha}\mu\rho_\alpha  \varphi\ne 0$ for each $\alpha<\omega$ by the universal property of the product $B^\lambda$.
Put $\mu_\alpha=\pi_{\beta_\alpha}\mu\in \Ac(A,B)$ and note  we have proved that $\mu_\alpha\rho_\alpha  \varphi$ is a nonzero morphism $M\to B$ for each $\alpha<\omega$.

The  universal property of the coproduct $A^{(\omega)}$ implies that there exists a uniquely determined morphism $\psi\in \Ac(A^{(\omega)},B^{(\omega)})$ for which the diagram
\begin{equation*}
\xymatrix{
	A  \ar@{->}[r]^{\mu_\alpha} \ar@{->}[d]_{\nu_\alpha} & B \ar@{->}[d]^{\tilde{\nu}_\alpha} \\
	A^{(\omega)}  \ar@{~>}[r]^{\psi }  & B^{(\omega)} \ar@{->}[r]^{\tilde{\rho}_\gamma}  & B } 
\end{equation*}
commutes, i.e. we have equalities
$\psi\nu_\alpha = \tilde{\nu}_\alpha\mu_\alpha$ and
$ \tilde{\rho}_\gamma\psi\nu_\alpha = \tilde{\rho}_\gamma\tilde{\nu}_\alpha\mu_\alpha$
for each $\alpha,\gamma<\omega$. Hence for every $\alpha<\omega$ we get
$\tilde{\rho}_\alpha\psi\nu_\alpha =\mu_\alpha$
and  
$ \tilde{\rho}_\gamma\psi\nu_\alpha = 0$ whenever $\gamma\ne\alpha$ by Lemma~\ref{Morph}(2). 
Note that it means that $\tilde{\rho}_\gamma\psi\nu_\alpha=\tilde{\rho}_\gamma\psi\nu_\gamma\rho_\gamma\nu_\alpha$
for all $\alpha,\gamma<\omega$

By applying Theorem~\ref{N-comp} again we need to show that $\tilde{\rho}_\gamma\psi\varphi\ne 0$ for all $\gamma<\omega$.
The universal property of the coproduct $A^{(\omega)}$ implies that for every $\gamma<\omega$ there exists a unique morphism $\tau_\gamma\in \Ac(A^{(\omega)},B)$ such that the diagram
\begin{equation*}
\xymatrix{
	A  \ar@{->}[r]^{\nu_\alpha} \ar@{->}[d]_{\nu_\alpha} & A^{(\omega)}  \ar@{->}[d]^{\tilde{\rho}_\gamma\psi} \\
	A^{(\omega)}  \ar@{~>}[r]^{\tau_\gamma}  & B } 
\end{equation*}
commutes for each $\alpha<\omega$.
Since $\tilde{\rho}_\gamma\psi\nu_\alpha=\tilde{\rho}_\gamma\psi\nu_\gamma\rho_\gamma\nu_\alpha$
for all $\alpha,\gamma<\omega$, we get the equality
$\tilde{\rho}_\gamma\psi=\tau_\gamma=\tilde{\rho}_\gamma\psi\nu_\gamma\rho_\gamma$
by the universal property of the coproduct $A^{(\omega)}$.
Now, it remains to compute for every $\gamma<\omega$
\[
\tilde{\rho}_\gamma\psi\varphi 
=\tau_\gamma\varphi=\tilde{\rho}_\gamma\psi\nu_\gamma\rho_\gamma\varphi
= \mu_\gamma\rho_\gamma\varphi \ne 0,
\]
so $M$ is not $B$-compact by Theorem~\ref{N-comp}.
\end{proof}

\begin{cor}\label{Autocompact2}  
Let $M$ and $N$ be objects such that there exists a cardinal $\lambda$ and a monomorphism $\mu\in\Ac(M, N^\lambda)$ and $M$ is $N$-compact, then
$M$ is autocompact. 
\end{cor}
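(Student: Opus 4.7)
The plan is to simply invoke Lemma~\ref{Autocompact1} with the substitution $A := M$ and $B := N$. The hypotheses of the corollary provide exactly what the lemma requires: a cardinal $\lambda$ and a monomorphism $\mu \in \Ac(M, N^\lambda)$, together with the assumption that $M$ is $N$-compact. Lemma~\ref{Autocompact1} then yields that $M$ is $M$-compact, which is by definition the property of being autocompact.

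There is essentially no obstacle; the corollary is a direct specialization. The only thing worth observing is that the substitution is permissible, i.e., that no nondegeneracy hypothesis on $A$ or $B$ is hidden in the statement of Lemma~\ref{Autocompact1} that would fail when $A=M=B$ is replaced by $A=M$, $B=N$. A glance at the proof of Lemma~\ref{Autocompact1} shows that $A$ and $B$ play independent roles there, so nothing is lost by taking $A=M$.

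Consequently the proof is a single line: by Lemma~\ref{Autocompact1} applied to the triple $(A,B,M) = (M,N,M)$, the object $M$ is $M$-compact, hence autocompact.
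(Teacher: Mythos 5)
Your proof is correct and coincides with the paper's intended argument: the corollary is stated without proof precisely because it is the immediate specialization of Lemma~\ref{Autocompact1} to the triple $(A,B,M)=(M,N,M)$, exactly as you observe. Your remark that $A$ and $B$ play independent roles in the lemma, so the substitution is harmless, is the only point worth checking, and you have checked it.
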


As another consequence of Lemma~\ref{Autocompact1} we can observe that general compactness can be tested by a single object.
Recall that the object $E$ the category $\Ac$ is called \emph{cogenerator} if the functor $\Ac(-,E)$ is an embedding \cite[Section 3.3]{F}. It is well known that there is a monomorphism of $A\to E^{\Ac(A,E)}$ for any object $A$ and a cogenerator $E$. 

\begin{prop}\label{inj}  Let $M$ be an object and $E$ be a cogenerator  of $\Ac$ such that $\Ac$ contains the product $ E^{\Ac(M,E)}$. Then $M$ is $E$-compact if and only if it is compact.
\end{prop}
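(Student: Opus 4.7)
The reverse implication is immediate, so the task is to deduce full compactness of $M$ from its $E$-compactness. By Lemma~\ref{Autocompact3} it suffices to show that $M$ is $N$-compact for every single object $N$ of $\Ac$, and the plan is to mimic the argument of Lemma~\ref{Autocompact1} while replacing the use of an ambient product $E^\lambda$ (which may fail to exist for an arbitrary $N$) with the cogenerator property of $E$.

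Fix an object $N$ and suppose toward a contradiction that $M$ is not $N$-compact. Theorem~\ref{N-comp}(4) provides $\varphi \in \Ac(M, N^{(\omega)})$ with $\rho_\alpha \varphi \neq 0$ for every $\alpha < \omega$. Because $E$ is a cogenerator, for each such $\alpha$ I may pick $g_\alpha \in \Ac(N, E)$ with $g_\alpha \rho_\alpha \varphi \neq 0$. Denoting by $\tilde{\nu}_\alpha$ and $\tilde{\rho}_\alpha$ the structural and associated morphisms of the coproduct $E^{(\omega)}$, the universal property of $N^{(\omega)}$ yields a unique $\psi \in \Ac(N^{(\omega)}, E^{(\omega)})$ with $\psi \nu_\alpha = \tilde{\nu}_\alpha g_\alpha$ for every $\alpha < \omega$.

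The crux of the argument, mirroring the manipulation in the proof of Lemma~\ref{Autocompact1}, is the identification $\tilde{\rho}_\gamma \psi = g_\gamma \rho_\gamma$ in $\Ac(N^{(\omega)}, E)$ for every $\gamma < \omega$. Both sides restrict on the $\alpha$-th summand of $N^{(\omega)}$ to $g_\gamma$ when $\alpha = \gamma$ and to $0$ otherwise (using $\tilde{\rho}_\gamma \tilde{\nu}_\alpha = \delta_{\alpha\gamma} 1_E$ on one side and $\rho_\gamma \nu_\alpha = \delta_{\alpha\gamma} 1_N$ on the other), so their equality is forced by the universal property of $N^{(\omega)}$. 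Composing on the right with $\varphi$ then yields $\tilde{\rho}_\gamma(\psi\varphi) = g_\gamma \rho_\gamma \varphi \neq 0$ for every $\gamma < \omega$, contradicting $E$-compactness of $M$ via Theorem~\ref{N-comp}(4).

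The only real obstacle is the identification $\tilde{\rho}_\gamma \psi = g_\gamma \rho_\gamma$; once this is in hand the rest is formal. The hypothesis on existence of $E^{\Ac(M,E)}$ in $\Ac$ is exactly what Corollary~\ref{Autocompact2} needs in order to upgrade $E$-compactness of $M$ to autocompactness, and one could alternatively formulate the proof by first establishing autocompactness through that corollary; the cogenerator-based computation above, however, bypasses the need to realize $N$ as a subobject of any product of copies of $E$, and therefore settles the $N$-compactness of $M$ for each $N$ uniformly.
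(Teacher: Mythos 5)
Your argument is correct, but it takes a genuinely different route from the paper. The paper's proof is a two-line reduction: given a family $\M$, the cogenerator property yields a monomorphism $\bigoplus\M\to E^{\lambda}$, whence Lemma~\ref{Autocompact1} gives $\bigoplus\M$-compactness of $M$ and Lemma~\ref{Autocompact3} gives $\M$-compactness. You instead open up the proof of Lemma~\ref{Autocompact1} and replace its use of the projections $\pi_{\beta_\alpha}\mu$ of an ambient product $E^{\lambda}$ by a direct appeal to faithfulness of $\Ac(-,E)$: choosing $g_\alpha\in\Ac(N,E)$ with $g_\alpha\rho_\alpha\varphi\ne 0$, assembling $\psi\in\Ac(N^{(\omega)},E^{(\omega)})$, and verifying $\tilde{\rho}_\gamma\psi=g_\gamma\rho_\gamma$ on each summand — all of which checks out, and your initial reduction to single-object $N$-compactness via Lemma~\ref{Autocompact3} is also sound. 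What your version buys is that it needs no products of copies of $E$ at all, whereas the paper's argument tacitly requires $\Ac$ to contain $E^{\Ac(\bigoplus\M,E)}$ for every family $\M$ (the stated hypothesis only guarantees $E^{\Ac(M,E)}$); what the paper's version buys is brevity and reuse of already-proved lemmas as black boxes. Your closing remark is also apt: the hypothesis on $E^{\Ac(M,E)}$ is not actually consumed by your argument, and in the paper it seems to be aimed at the embedding $M\to E^{\Ac(M,E)}$ rather than at anything your proof requires.
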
 

\begin{proof} Clearly, it is enough to prove the direct implication.
Let $M$ be $E$-compact and $\M$ be a family of objects. Since $E$ is a cogenerator , there exists a cardinal $\lambda$ and a monomorphism $\mu\in\Ac(\bigoplus\M , E^\lambda)$.
Then $M$ is $\bigoplus\M$-compact by Lemma~\ref{Autocompact1} and so $\M$-compact by Lemma~\ref{Autocompact3}. 
\end{proof}	

The rest of this section is dedicated to description of relative compactness over finite coproducts of finite coproducts of objects.

\begin{lm}\label{sum}  Let $A$ be an object and $\M$ a finite family of objects.
\begin{enumerate}
\item If $N$ is $A$-compact for each $N\in\M$, then $\bigoplus\M$ is $A$-compact.
\item If $A$ is $N$-compact for each $N\in\M$, then $A$ is $\bigoplus\M$-compact.
\end{enumerate}
\end{lm}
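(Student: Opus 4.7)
The plan is to reduce each part to Theorem~\ref{N-comp} applied object by object and then use the finiteness of $\M$ to glue the resulting factorisations. For part~(1) the plan is to use the equivalence (1)$\Leftrightarrow$(2) in Theorem~\ref{N-comp}: given $f\in\Ac(\bigoplus\M, A^{(\kappa)})$, compose with each structural morphism $\nu_N: N\to\bigoplus\M$. For every $N\in\M$, the composite $f\circ\nu_N$ factors through a finite $A^{(F_N)}$ by $A$-compactness of $N$. Since $\M$ is finite, $F:=\bigcup_{N\in\M} F_N\subseteq\kappa$ is finite; composing each factorisation with the canonical morphism $A^{(F_N)}\to A^{(F)}$ yields a compatible family of morphisms $N\to A^{(F)}$, and the universal property of $\bigoplus\M$ then supplies a single $f'\in\Ac(\bigoplus\M, A^{(F)})$ with $f=\nu_F\circ f'$.

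For part~(2) the plan is to use the equivalence (1)$\Leftrightarrow$(4) in Theorem~\ref{N-comp}: given $\varphi\in\Ac(A, (\bigoplus\M)^{(\omega)})$, it suffices to produce a single $\alpha<\omega$ with $\rho_\alpha\circ\varphi=0$. For each $N\in\M$, the universal property of the coproduct $(\bigoplus\M)^{(\omega)}$ will be used to build a unique morphism $\tau_N\in\Ac((\bigoplus\M)^{(\omega)}, N^{(\omega)})$ with $\tau_N\circ\nu_\beta = \nu^N_\beta\circ\rho_N$ for every $\beta<\omega$, where $\rho_N:\bigoplus\M\to N$ denotes the associated morphism of the singleton subcoproduct $\{N\}\subseteq\M$ and $\nu^N_\beta$ the structural morphism of $N^{(\omega)}$. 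A direct check on the structural morphisms $\nu_\beta$ will yield the identity $\rho^N_\alpha\circ\tau_N = \rho_N\circ\rho_\alpha$ for every $\alpha<\omega$. Invoking $N$-compactness of $A$ via Theorem~\ref{N-comp}(2) for $\tau_N\circ\varphi\in\Ac(A, N^{(\omega)})$ will then supply a finite $F_N\subseteq\omega$ outside of which $\rho^N_\alpha\circ\tau_N\circ\varphi = \rho_N\circ\rho_\alpha\circ\varphi$ vanishes. Because $\M$ is finite, $F:=\bigcup_{N\in\M} F_N$ is finite, so any $\alpha\in\omega\setminus F$ will satisfy $\rho_N\circ\rho_\alpha\circ\varphi=0$ for every $N\in\M$. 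The final step will invoke the fact that, $\M$ being finite, $\bigoplus\M$ coincides with $\prod\M$ and the identity decomposes as $1_{\bigoplus\M}=\sum_{N\in\M}\nu_N\circ\rho_N$, whence $\rho_\alpha\circ\varphi=\sum_{N\in\M}\nu_N\circ\rho_N\circ\rho_\alpha\circ\varphi=0$.

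The main technical obstacle will be the construction of the auxiliary morphisms $\tau_N$ in part~(2) and the verification of the key identity $\rho^N_\alpha\circ\tau_N=\rho_N\circ\rho_\alpha$; once this is in place, the remainder of (2) is routine bookkeeping. Finiteness of $\M$ is used essentially twice in (2): once to keep $\bigcup_{N\in\M} F_N$ finite, and once to invoke the additive decomposition $1_{\bigoplus\M}=\sum_{N\in\M}\nu_N\circ\rho_N$ that is available only because $\bigoplus\M$ agrees with $\prod\M$ in the finite case. Part~(1) is simpler and should follow straightforwardly from the universal property of $\bigoplus\M$.
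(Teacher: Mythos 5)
Your proof is correct and follows essentially the same route as the paper: both arguments reduce to Theorem~\ref{N-comp} by composing with the structural morphisms $\nu_N$ (for part (1)) and the associated morphisms $\rho_N$ together with the induced map into $N^{(\omega)}$ (for part (2)), and both use finiteness of $\M$ in the same two places. The only difference is presentational: you argue directly, gluing finite factorisations and intersecting cofinite sets, whereas the paper argues by contradiction and a pigeonhole step to extract one $N\in\M$ that fails on an infinite index set; the mathematical content is the same.
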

\begin{proof}  (1)
Assume that  $\bigoplus\M$ is not $A$-compact. Then by Theorem~\ref{N-comp} there exists a morphism 
$\varphi\in \Ac(\bigoplus\M,A^{(\omega)})$ with $\rho_n\varphi\ne 0$
for all associated morphisms $\rho_n$ of $A^{(\omega)}$.

Note that for each $n<\omega$ there exists some $N\in\M$ such that $\rho_n\varphi\nu_N\ne 0$ by the universal property of the coproduct $\bigoplus\M$ , where $\nu_N$ are the corresponding structural morphisms of $\bigoplus\M$. Therefore there exists $N\in\Nc$ for which the set
\[
I=\{n<\omega\mid \rho_n\varphi\nu_N\ne 0\}
\]
is infinite and  the morphism $\tilde{\varphi}=\rho_{I}\varphi\nu_N$ ensured by Lemma~\ref{Morph} satisfies 
$
\rho_n\tilde{\varphi} = \rho_n\rho_{I}\varphi\nu_N \ne  0
$ for each $n\in I$.
Now, Theorem~\ref{N-comp} implies that $N$ is not $A$-compact.

(2) Put $M=\bigoplus\M$ and denote by $\rho_i$, $\tilde{\rho}_i$ and $\rho_N$ the corresponding associate morphisms of coproducts $M^{(\omega)}$ and $N^{(\omega)}$ for each $N\in \M$ and $i<\omega$.
Denote furthermore  by 
$\rho_{N^{(\omega)}}\in\Ac(M^{(\omega)}, N^{(\omega)})$ the morphism given by Lemma~\ref{StructMorph} which satisfies $\tilde{\rho}_i\rho_{N^{(\omega)}}=\rho_N\rho_i$ for each $N\in \M$ and $i<\omega$.
Assume that  $A$ is not $\bigoplus\M$-compact: there exists a morphism $\varphi\in \Ac(A,M^{(\omega)})$ such that $\rho_n\varphi\ne 0$  for infinitely many $n$ by Theorem~\ref{N-comp} and now using the same argument as in the proof of (1)  we can find $N\in\M$ such that the set
\[
J=\{i<\omega\mid \rho_N\rho_i\varphi\ne 0\}
\]
is infinite. Since $\tilde{\rho}_i\rho_{N^{(\omega)}}\varphi = \rho_N\rho_i\varphi\ne 0$ for every $i\in J$, the object $A$ is not $N$-compact.
\end{proof}

\begin{lm}\label{A-compact}  
Let $M$, $N$ and $A$ be objects of $\Ac$ and $n$ be a natural number. If there exists an epimorphism $M^{(n)}\to N$ and $M$ is $A$-compact, then $N$ is $A$-compact.
\end{lm}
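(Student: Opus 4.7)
The plan is to reduce to the case where the source of the epimorphism is $A$-compact (which is immediate from the already-proved Lemma~\ref{sum}(1)), and then to translate $A$-compactness of $N$ back to $A$-compactness of that source along the epimorphism using the universal property of cokernels.

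First I would apply Lemma~\ref{sum}(1) to the finite family consisting of $n$ copies of $M$: since each copy is $A$-compact, $M^{(n)}$ is itself $A$-compact. So without loss of generality the hypothesis becomes: there is an epimorphism $e\in\Ac(P,N)$ where $P:=M^{(n)}$ is $A$-compact, and we must prove $N$ is $A$-compact. I would work with criterion (2) of Theorem~\ref{N-comp}: given an arbitrary cardinal $\kappa$ and $f\in\Ac(N,A^{(\kappa)})$, we must factor $f$ as $f=\nu_F\circ f'$ for some finite $F\subset\kappa$ and some $f'\in\Ac(N,A^{(F)})$.

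Consider the composition $fe\in\Ac(P,A^{(\kappa)})$. By Theorem~\ref{N-comp}(2) applied to the $A$-compact object $P$, there exist a finite set $F\subset\kappa$ and a morphism $g\in\Ac(P,A^{(F)})$ with $fe=\nu_F\circ g$. Let $k\in\Ac(K,P)$ be the kernel of $e$. The structural morphism $\nu_F$ of the subcoproduct is a split monomorphism (its retraction $\rho_F$ exists by Lemma~\ref{StructMorph}(1)), and from $\nu_F\circ g\circ k=f\circ e\circ k=0$ we get $g\circ k=0$. Since $\Ac$ is abelian, $e$ is the cokernel of its kernel $k$, so the universal property of cokernels produces a unique $f'\in\Ac(N,A^{(F)})$ with $f'\circ e=g$. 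Then $\nu_F\circ f'\circ e=\nu_F\circ g=f\circ e$, and because $e$ is an epimorphism this forces $\nu_F\circ f'=f$, as required. By Theorem~\ref{N-comp}, $N$ is $A$-compact.

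There is no real obstacle here; the only subtlety is noticing that $\nu_F$ is (split) mono — so that vanishing of $\nu_F\circ g\circ k$ forces vanishing of $g\circ k$ — together with the right-cancellation of the epimorphism $e$ at the end. Everything else is a direct assembly of Lemma~\ref{sum}(1), Lemma~\ref{StructMorph}(1), and the criterion of Theorem~\ref{N-comp}.
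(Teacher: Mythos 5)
Your proof is correct, but it takes a genuinely different route from the paper's. Both arguments begin the same way, invoking Lemma~\ref{sum}(1) to get $A$-compactness of $M^{(n)}$; the divergence is in how the epimorphism $e\colon M^{(n)}\to N$ transfers compactness. The paper argues by contraposition using criterion (4) of Theorem~\ref{N-comp}: if $N$ fails to be $A$-compact, pick $\varphi\in\Ac(N,A^{(\omega)})$ with $\rho_i\varphi\ne 0$ for all $i$, and observe that $\rho_i\varphi e\ne 0$ as well (right-cancellation of the epimorphism), so $M^{(n)}$ fails too. That is a two-line argument needing nothing beyond the defining property of an epimorphism. You instead work directly with criterion (2) and descend the finite factorization of $fe$ along $e$, which requires the extra machinery you correctly identify: the split monomorphy of $\nu_F$ (so that $\nu_F gk=0$ forces $gk=0$), the fact that in an abelian category an epimorphism is the cokernel of its kernel, and the universal property of that cokernel. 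Your version buys an explicit factorization $f=\nu_F\circ f'$ of the original morphism, at the cost of invoking the abelian structure; the paper's contrapositive avoids all of this. Both are complete and valid.
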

\begin{proof} 
Assume that  $N$ is not $A$-compact. Then there exists a morphism 
$\varphi\in \Ac(N,A^{(\omega)})$ such that $\rho_\alpha\varphi\ne 0$
for all associated morphisms $\rho_\alpha$ of $A^{(\omega)}$ by 
Theorem~\ref{N-comp}. If $\mu\in \Ac(M^{(n)}, N)$ is an epimorphism, $\rho_i\varphi\mu\ne 0$ for each $i<\omega$, hence $M^{(n)}$ is not $A$-compact. Then $M$ is not $A$-compact by Lemma~\ref{sum}(1).
\end{proof}

We can summarize the obtained necessary condition of autocompactness.
	
\begin{prop}\label{Autocompact4}  
Let $M$ and $N$ be objects of $\Ac$ such that there exists an epimorphism $M^{(n)}\to N$ for an integer $n$ and a monomorphism $N\to M^{\lambda}$ for a cardinal $\lambda$. If $M$ is is  autocompact, then $N$ is autocompact as well.
\end{prop}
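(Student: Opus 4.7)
The plan is to chain together the two preceding results so that the hypotheses of Corollary~\ref{Autocompact2} are satisfied for the object $N$ (playing the role of the first argument) and $M$ (playing the role of the second argument). The only datum missing from the hypothesis is relative compactness of $N$ over $M$, and this is precisely what the epimorphism $M^{(n)} \twoheadrightarrow N$ will buy us from the autocompactness assumption on $M$.

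In more detail, the first step is to apply Lemma~\ref{A-compact} with the parameter $A$ instantiated to $M$ itself. Since $M$ is autocompact, it is $M$-compact by definition, and the given epimorphism $M^{(n)} \to N$ then forces $N$ to be $M$-compact. This already eliminates the asymmetry between the two objects and reduces the question of autocompactness of $N$ to relative compactness over the auxiliary object $M$.

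The second step is to feed this conclusion into Corollary~\ref{Autocompact2}, with its object denoted there by $M$ replaced by our $N$ and its $N$ replaced by our $M$. The required monomorphism $N \to M^{\lambda}$ is exactly the one supplied by hypothesis, and the relative compactness of $N$ over $M$ was established in the previous step, so the corollary delivers autocompactness of $N$.

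There is essentially no obstacle here beyond correctly matching variables in the two invoked results; the real content lies in Lemma~\ref{Autocompact1} (which underlies Corollary~\ref{Autocompact2}) and in Lemma~\ref{A-compact}, both of which have already been proved. The proposition is thus obtained by a direct two-line concatenation of these statements.
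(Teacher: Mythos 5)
Your proof is correct and follows exactly the same route as the paper: Lemma~\ref{A-compact} with $A=M$ gives that $N$ is $M$-compact, and Corollary~\ref{Autocompact2} (with the roles of its two objects swapped) then yields autocompactness of $N$ from the monomorphism $N\to M^{\lambda}$. The paper's own proof is just a terser statement of the identical two-step argument.
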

\begin{proof}  
$N$ is $M$-compact, as follows from Lemma~\ref{A-compact}. Hence it is $N$-compact, so autocompact by Corollary~\ref{Autocompact2}.
\end{proof}

The next consequence presents a categorial version of the classical fact that an endomorphic image of a self-small module is self-small.

\begin{cor} If $M$ is an autocompact object, such that there exist an epimorphism $\epsilon\in\Ac(M,N)$ and a monomorphism $\mu\in\Ac(N,M)$, then $N$ is autocompact.
\end{cor}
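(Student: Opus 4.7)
The plan is to observe that this corollary is an immediate specialization of Proposition~\ref{Autocompact4}, where the hypotheses are trivially satisfied with minimal cardinal parameters. Concretely, the epimorphism $\epsilon \in \Ac(M,N)$ is already an epimorphism $M^{(1)} \to N$ (taking $n = 1$), and the monomorphism $\mu \in \Ac(N,M)$ is already a monomorphism $N \to M^{1}$ (taking $\lambda = 1$, using the canonical identification of $M^1$ with $M$). Thus both hypotheses of Proposition~\ref{Autocompact4} hold with $n = \lambda = 1$.

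So the one-line proof is: \emph{Apply Proposition~\ref{Autocompact4} with $n=1$ and $\lambda=1$ to conclude that $N$ is autocompact.}

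There is no real obstacle here, since the corollary is designed precisely as the classical module-theoretic instance (endomorphic image of a self-small object) that the more general Proposition~\ref{Autocompact4} was set up to cover. The only thing worth mentioning is that one implicitly uses the canonical isomorphisms $M^{(1)} \cong M \cong M^1$ in the category $\Ac$, which are automatic.
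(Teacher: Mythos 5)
Your proof is correct and matches the paper's intent exactly: the corollary is stated immediately after Proposition~\ref{Autocompact4} as a direct consequence, with no separate proof given, and specializing to $n=\lambda=1$ via the canonical identifications $M^{(1)}\cong M\cong M^1$ is precisely the intended argument.
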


\begin{exm} Let $A$ be a self-small right modules over a ring, i.e. autocompact object in the category of right modules. Denote 
$\mathcal K=\{\Ker f\mid f\in\End(A)\}$ and let $\mathcal L\subset \mathcal K$. Then $A/\bigcap \mathcal L$ is a self-small module by Proposition~\ref{Autocompact4} since there exist  monomorphisms
$A/\bigcap \mathcal L\hookrightarrow \prod_{L\in\mathcal L} A/L\hookrightarrow \prod_{L\in\mathcal L} A$ 
\end{exm}

We conclude the section mentioning closure properties of relatively compact objects.

\begin{prop}\label{sum-compact}  
Let $\M$ and $\Nc$ be finite families of objects. Then $\bigoplus\M$ is $\bigoplus\Nc$-compact if and only if $M$ is $N$-compact for all $M\in\M$ and $N\in\Nc$.
\end{prop}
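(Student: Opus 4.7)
The plan is to derive both implications purely by chaining together the lemmas already available in the preceding section, so no new categorical construction will be needed.

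For the backward implication (sufficiency), assume $M$ is $N$-compact for every $M\in\M$ and every $N\in\Nc$. I would fix $M\in\M$ first and apply Lemma~\ref{sum}(2) to the finite family $\Nc$: since $M$ is $N$-compact for each $N\in\Nc$, we get that $M$ is $\bigoplus\Nc$-compact. Having established this for every member of the finite family $\M$, Lemma~\ref{sum}(1) applied with target object $A=\bigoplus\Nc$ then yields that $\bigoplus\M$ is $\bigoplus\Nc$-compact.

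For the forward implication (necessity), assume $\bigoplus\M$ is $\bigoplus\Nc$-compact and fix arbitrary $M\in\M$ and $N\in\Nc$. Since $\M$ is finite, the associated morphism $\rho_{\{M\}}\in\Ac(\bigoplus\M,M)$ supplied by Lemma~\ref{StructMorph}(1) applied to the singleton subfamily $\{M\}\subseteq\M$ satisfies $\rho_{\{M\}}\circ\nu_M=1_M$ and is therefore a split epimorphism. Lemma~\ref{A-compact} applied with this epimorphism, exponent $n=1$, and $A=\bigoplus\Nc$, allows me to transfer compactness from $\bigoplus\M$ down to the summand $M$, giving that $M$ is $\bigoplus\Nc$-compact. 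Lemma~\ref{Autocompact3} then converts this into $\Nc$-compactness of $M$, and Lemma~\ref{inclusion-comp} applied to the subfamily $\{N\}\subseteq\Nc$ finishes the job: $M$ is $N$-compact.

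I anticipate no serious obstacle; the proof is really an orchestration of earlier results. The only small check is that the retraction $\rho_{\{M\}}$ qualifies as an epimorphism to feed into Lemma~\ref{A-compact}, which is immediate since it splits the monomorphism $\nu_M$. The finiteness hypotheses on $\M$ and $\Nc$ enter exactly where Lemmas~\ref{sum} and~\ref{StructMorph} demand them, which is consistent with the obstruction recorded in Corollary~\ref{sum-compact0} for the infinite case.
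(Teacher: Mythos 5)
Your proof is correct and follows essentially the same route as the paper's: the backward implication chains the two parts of Lemma~\ref{sum} (merely in the opposite order), and the forward implication uses the split epimorphism onto a summand together with Lemma~\ref{A-compact}. The only cosmetic difference is the last step, where the paper invokes Lemma~\ref{Autocompact1} via the monomorphism $\nu_N\in\Ac(N,\bigoplus\Nc)$ instead of your Lemma~\ref{Autocompact3} plus Lemma~\ref{inclusion-comp}; both are valid.
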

\begin{proof} $(\Rightarrow)$ Since the associate morphism $\rho_N\in\Ac(\bigoplus\M,M)$ is an epimorphism for each $M\in\M$ and $\bigoplus\M$ is $\bigoplus\Nc$-compact, each object $M$ 
is $\bigoplus\Nc$-compact by Lemma~\ref{A-compact}.
As $\nu_N\in\Ac(N,\bigoplus\Nc)$ is a monomorphism for each $N\in\Nc$, any object $M\in\M$ 
is $N$-compact by Lemma~\ref{Autocompact1}.

$(\Leftarrow)$ Lemma~\ref{sum}(1) implies that $\bigoplus\M$ is $N$-compact for each $N\in\Nc$
and then Lemma~\ref{sum}(2) implies that $\bigoplus\M$ is $\bigoplus\Nc$-compact.
\end{proof}

\section{Description of autocompact objects}

This section is dedicated mainly to the generalization of a classical autocompactness criteria \cite{A-M} to an Ab5 category $\Ac$.

Assume $M$ is an object such that the category $\Ac$ is closed under  products $M^\lambda$ for all $\lambda\le|\End_\Ac(M)|$ and take $I\subseteq \End_\Ac(M)=\Ac(M,M)$.
Then there exists a unique morphism $\tau_I\in \Ac(M,M^I)$ satisfying $\pi_\iota\tau_I=\iota$ for each $\iota\in I$ by the universal property of the product $M^I$. Let us denote by $\Kc(I)=(K_I,\nu_I)$ the kernel of the morphism $\tau_I$ and note that $\Kc(I)$ is defined uniquely up to isomorphism.

For an object $K$ of $\Ac$ consider a morphism $\nu\in\Ac(K,M)$. We will then set
\[
\Ic(K,\nu)=\{\iota\in \End_\Ac(M)\mid \iota \nu=0 \}.
\]
It is easy to see that $\Ic(K,\nu)$ forms a left ideal of the endomorphism ring $\End_\Ac(M)$.
We say that a left ideal $I$ of $\End_\Ac(M)$ is an \emph{annihilator ideal} if $\Ic(\Kc(I))=I$.

\begin{lm}\label{annihilator}  Let $\Ac$ be closed under products $M^\lambda$ for all $\lambda\le|\End_\Ac(M)|$.
Then $\Ic(K,\nu)$ is an annihilator ideal of $\End(M)$ for all $\nu\in\Ac(K,M)$.
\end{lm}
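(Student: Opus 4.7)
The plan is to prove both inclusions of the equality $\Ic(\Kc(I)) = I$ for the left ideal $I := \Ic(K,\nu)$. Note first that $|I| \le |\End_\Ac(M)|$, so the hypothesis guarantees existence of the product $M^I$, of the morphism $\tau_I \in \Ac(M,M^I)$ determined by $\pi_\iota \tau_I = \iota$ for all $\iota \in I$, and therefore of the kernel $\Kc(I) = (K_I,\nu_I)$ of $\tau_I$.

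For the inclusion $I \subseteq \Ic(\Kc(I))$, I would fix an arbitrary $\iota \in I$ and compute
\[
\iota \circ \nu_I \;=\; \pi_\iota \circ \tau_I \circ \nu_I \;=\; \pi_\iota \circ 0 \;=\; 0,
\]
using the defining property of $\tau_I$ and the fact that $\tau_I \circ \nu_I = 0$ because $\nu_I$ is the kernel of $\tau_I$. Hence $\iota \in \Ic(K_I,\nu_I) = \Ic(\Kc(I))$. This half uses no information about $\nu$ and in fact shows $J \subseteq \Ic(\Kc(J))$ for every left ideal $J$ of $\End_\Ac(M)$.

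For the reverse inclusion $\Ic(\Kc(I)) \subseteq I$, the key step is to show that the original morphism $\nu$ factors through $\nu_I$. For each $\iota \in I = \Ic(K,\nu)$ one has $\pi_\iota \circ \tau_I \circ \nu = \iota \circ \nu = 0$, and the universal property of the product $M^I$ then forces $\tau_I \circ \nu = 0$. Since $(K_I,\nu_I)$ is the kernel of $\tau_I$, there exists $\nu' \in \Ac(K,K_I)$ with $\nu = \nu_I \circ \nu'$. Now if $\alpha \in \Ic(\Kc(I))$, i.e.\ $\alpha \circ \nu_I = 0$, then $\alpha \circ \nu = \alpha \circ \nu_I \circ \nu' = 0$, so $\alpha \in \Ic(K,\nu) = I$.

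The argument is genuinely short and purely categorical, combining only the two relevant universal properties (of the product $M^I$ and of the kernel $\nu_I$). There is no real obstacle; the only point that must be handled carefully is the order in which these universal properties are invoked, and the implicit verification that $\Ic(K,\nu)$ is a left ideal (which is already stated immediately after its definition), so that the construction of $\tau_I$ applied to $I$ itself is legitimate.
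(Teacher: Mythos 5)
Your proof is correct and follows essentially the same route as the paper's: the same computation $\iota\nu_I=\pi_\iota\tau_I\nu_I=0$ for one inclusion, and the factorization $\nu=\nu_I\circ\nu'$ through the kernel for the other (the paper phrases this half contrapositively, taking $\gamma\notin I$ and concluding $\gamma\nu_I\ne 0$). Your explicit verification that $\tau_I\circ\nu=0$ before invoking the universal property of the kernel is a detail the paper leaves implicit, but the argument is the same.
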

\begin{proof} Put $I=\Ic(K,\nu)$, $(K_I,\nu_I)=\Kc(I)$ and $\tilde{I}=\Ic\Kc(I)=\Ic(K_I,\nu_I)$. Furthermore, denote by 
$\tau_I\in \Ac(K,M^I)$ the morphism satisfying $\pi_\iota\tau_I=\iota$ for each $\iota\in I$, i.e. $(K_I,\nu_I)$ is the kernel of $\tau_I$.
Since $\tau_I\nu_I=0$, we can easily compute that $\iota\nu_I=\pi_\iota\tau_I\nu_I=0$ for every $\iota\in I$, which implies $I\subseteq \tilde{I}$. 

To prove the reverse inclusion $\tilde{I}\subseteq I$, let us note that by the universal property of the kernel $\nu_I$ there exists a unique morphism $\alpha\in\Ac(K,K_I)$ such that all squares in the diagram
\begin{equation*}
\xymatrix{ 
	K  \ar@{->}[r]^{\nu} \ar@{~>}[d]_{\alpha} & M \ar@{->}[r]^{\tau_I}\ar@{=}[d]^{} & M^I \ar@{->}[d]^{\pi_\iota} \\
	K_I \ar@{->}[r]^{\nu_I}  & M \ar@{->}[r]^{\iota}  & M
}
\end{equation*}
commute for each $\iota\in I$. Consider a morphism $\gamma\in \End(M)$ such that $\gamma\notin I$. Then 
$\gamma\nu_I\alpha=\gamma\nu\ne 0$ by the definition of the ideal $I$.
Hence $\gamma\nu_I\ne 0$ and so $\gamma\notin \tilde{I}$.
\end{proof}

Recall the concepts of exactness and inverse limits in Ab5 categories.

The diagram $A_0\stackrel{\alpha_1}{\longrightarrow}  A_1\stackrel{\alpha_2}{\longrightarrow}\dots\stackrel{\alpha_{n-1}}{\longrightarrow}A_{n-1}\stackrel{\alpha_n}{\longrightarrow}A_{n}$ is said to be an \emph{exact sequence} provided for each $i=1,\dots,n-1$ the equality $\alpha_{i+1}\alpha_i=0$ holds and there exist an object $K_i$ together with morphisms $\xi_i\in\Ac(A_i,K_i)$ and $\theta_i\in\Ac(K_i,A_i)$ such that $(K_i,\theta_i)$ is a kernel of $\alpha_{i+1}$,  $(K_i,\xi_i)$ is a cokernel of $\alpha_i$ and $\xi_i\theta_i=1_{K_i}$.
In particular, the diagram $0\to A \stackrel{\alpha}{\longrightarrow}  B \stackrel{\beta}{\longrightarrow} C\to 0$ is  a \emph{short exact sequence} provided $\alpha$ is a kernel of $\beta$ and $\beta$ is a cokernel of $\alpha$, hence $\alpha$ is a monomorphism and $\beta$ is an epimorphism. Recall that any monomorphism (epimorphism) can be expressed as the first (second)  morphism of some short exact sequence in an Ab5-category.

A diagram $\Dc=(\{M_i\}_{i<\omega}, \{\nu_{i,j}\}_{i<j<\omega})$ is called an \emph{$\omega$-spectrum of $M$}, if $\nu_{i,j}\in\Ac(M_i,M_j)$, $\nu_{j,k}\nu_{i,j}=\nu_{i,k}$ for each $i<j<k<\omega$, and there exist morphisms $\nu_i\in\Ac(M_i,M)$ for all $i<\omega$ such that $(M,\{\nu_i\}_{i<\omega})$ is a colimit of the diagram $\Dc$ (i.e. it is a direct limit of the spectrum $\Dc$).

\begin{lm}\label{sums}
Let $M$ be an object and $M^{(\omega)}$ be a coproduct with structural morphisms $\nu_i$ and associated morphisms $\rho_i$, $i<\omega$. Put 
\[
n=\{0,\dots,n-1\},\ \ [n,\omega)=\omega\setminus n =\{i<\omega\mid i\ge n\},
\]
let $M^{(n)}$ and  $M^{([n,\omega))}$ be subcoproducts of $M^{(\omega)}$. Denote by $\nu_{(n,m)}\in\Ac(M^{(n)}, M^{(m)})$,  $\nu_{<n}\in \Ac(M^{(n)}, M^{(\omega)})$ the structural morphisms and by $\rho_{(n,m)}\in\Ac(M^{([n,\omega))}, M^{([m,\omega))})$,  $\rho_{\ge n}\in \Ac( M^{(\omega)}, M^{([n,\omega))})$ the associated morphisms given by Lemma~\ref{StructMorph} for all $n<m<\omega$. Then
\begin{enumerate}
\item for each $n<m<\omega$ all squares in the diagram with exact rows
\begin{equation*}
\xymatrix{ 
\mathfrak{M}_n: & 0 \ar@{->}[r] &	M^{(n)}  \ar@{->}[r]^{\nu_{<n}} \ar@{->}[d]_{\nu_{(n,m)}} & M^{(\omega)} \ar@{->}[r]^{\rho_{\ge n}}\ar@{=}[d] & M^{([n,\omega))}   \ar@{->}[d]^{\rho_{(n,m)}}\ar@{->}[r] &0 \\
\mathfrak{M}_m: & 0 \ar@{->}[r] &	M^{(m)} \ar@{->}[r]^{\nu_{<m}}  & M^{(\omega)}\ar@{->}[r]^{\rho_{\ge m}}  & M^{([m,\omega))} \ar@{->}[r] &0
}
\end{equation*}
commute,
\item the short exact sequence
\begin{equation*}
\xymatrix{ 
0 \ar@{->}[r] &	M^{(\omega)}  \ar@{->}[r]^{id} & M^{(\omega)} \ar@{->}[r]^{0} & 0 \ar@{->}[r] &0 
}
\end{equation*}
with morphisms $(\nu_{<n}, id, 0)$ forms a colimit of the
$\omega$-spectrum \\ $(\{\mathfrak{M}_n\}_n, \{(\nu_{(n,m)}, id, \rho_{(n,m)})\}_{n<m})$ in the category of complexes, 
\item $\rho_i\nu_{<n}=\rho_i$ if $i<n$ and $\rho_i\nu_{<n}=0$ otherwise.
\end{enumerate}
\end{lm}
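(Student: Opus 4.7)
My plan is to handle each of the three parts via the universal properties of coproducts together with the Ab5 axiom. For part (1), I would invoke Lemma~\ref{StructMorph}(3) applied to the disjoint partition $\omega = n \sqcup [n,\omega)$, which identifies $M^{(\omega)}$ canonically with $M^{(n)} \oplus M^{([n,\omega))}$ in such a way that $\nu_{<n}$ and $\rho_{\ge n}$ are the canonical split inclusion and split projection of a biproduct; this immediately gives both rows as split short exact sequences. Commutativity of the two squares amounts to the identities $\nu_{<m} \nu_{(n,m)} = \nu_{<n}$ and $\rho_{(n,m)} \rho_{\ge n} = \rho_{\ge m}$, each of which I would verify by precomposing with the structural morphisms $\overline{\nu}_j : M \to M^{(n)}$ for $j < n$ and invoking the universal property of $M^{(n)}$ (respectively, by composing against an appropriate structural projection from the target).

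Part (3) is then an immediate computation: for any $j < n$ we have $\nu_{<n} \overline{\nu}_j = \nu_j$, so $\rho_i \nu_{<n} \overline{\nu}_j = \rho_i \nu_j$. For $i < n$ this equals $\delta_{ij}\cdot 1_M$, which exhibits $\rho_i \nu_{<n}$ as the $i$-th associated morphism of $M^{(n)}$ (still denoted $\rho_i$ by abuse of notation); for $i \ge n$ all these compositions vanish and the universal property of $M^{(n)}$ forces $\rho_i \nu_{<n} = 0$.

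The substantive content is part (2). The plan is to compute the colimit of the $\omega$-spectrum termwise. The colimit of the left column $(M^{(n)}, \nu_{(n,m)})$ is $M^{(\omega)}$ via the canonical cone $(\nu_{<n})$, this being the standard presentation of a countable coproduct as the filtered colimit of its finite subcoproducts. The middle column is constant on $M^{(\omega)}$, so its colimit is $M^{(\omega)}$ with the identity cone. Because $\Ac$ is Ab5, taking the colimit preserves exactness, hence I obtain an exact sequence $0 \to M^{(\omega)} \xrightarrow{f} M^{(\omega)} \to L \to 0$, where $L$ denotes the colimit of the right column. The morphism $f$ is induced by the cone $(\nu_{<n})$ regarded as mapping into the identity cone of the middle column, and under these canonical identifications $f$ coincides with $1_{M^{(\omega)}}$, which forces $L = 0$ and delivers the claimed colimit description.

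The main obstacle, modest as it is, lies in carefully tracking the cone identifications in part (2) so that the induced endomorphism of $M^{(\omega)}$ is correctly recognised as the identity; once that step is pinned down, Ab5 closes the argument automatically.
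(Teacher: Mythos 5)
Your proposal is correct and follows exactly the route the paper intends: its entire proof reads ``An easy exercise of application of Lemma~\ref{StructMorph} in a Ab5-category,'' and your argument is precisely that exercise carried out in full (split exactness of the rows from the partition $\omega=n\sqcup[n,\omega)$, commutativity and part (3) by precomposing with coproduct injections, and part (2) by termwise colimits plus the Ab5 axiom). The only cosmetic slip is in verifying $\rho_{(n,m)}\rho_{\ge n}=\rho_{\ge m}$, where the clean move is again to precompose with the injections $\nu_j$ of $M^{(\omega)}$ rather than to ``compose against a projection from the target''; this changes nothing of substance.
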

\begin{proof}
An easy exercise of application of Lemma~\ref{StructMorph} in a Ab5-category.
\end{proof}

Before we formulate the categorial version of \cite[Proposition 1.1]{A-M} we prove a more general result:

\begin{lm}\label{notM-comp} Let for $M\in\Ac$ the category contain the products $M^\omega$.
The following conditions are equivalent for an object $N \in \Ac$:
\begin{enumerate}
\item $N$ is not $M$-compact,
\item there exists an $\omega$-spectrum $(\{N_i\}_{i<\omega}, \{\mu_{i,j}\}_{i<j<\omega})$ of $N$ with colimit  $(N,\{\mu_i\}_{i<\omega})$ such that all $\mu_i$ and $\mu_{i,j}$ for all $i<j<\omega$ are monomorphisms and for each $i<\omega$ there exists a nonzero morphism $\varphi_i\in\Ac(N,M)$ satisfying $\varphi_i\mu_i=0$,
\item there exists an $\omega$-spectrum with colimit  $(N,\{\mu_i\}_{i<\omega})$ such that for each $i<\omega$ there exists a nonzero morphism $\varphi_i\in\Ac(N,M)$ satisfying $\varphi_i\mu_i=0$.
\end{enumerate}
\end{lm}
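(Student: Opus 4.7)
Implication $(2)\Rightarrow(3)$ is immediate, so the remaining work is $(1)\Rightarrow(2)$ and $(3)\Rightarrow(1)$. Both rely on Theorem~\ref{N-comp}(4), which says that $N$ fails to be $M$-compact precisely when there is some $\varphi\in\Ac(N,M^{(\omega)})$ with $\rho_i\varphi\ne 0$ for every $i<\omega$, and on two Ab5 facts: the compatible morphism $t\colon M^{(\omega)}\to M^\omega$ is a monomorphism, and filtered colimits commute with kernels.

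For $(1)\Rightarrow(2)$, I would fix such a $\varphi$ and, using the notation of Lemma~\ref{sums}, define $\mu_i\colon N_i\to N$ as the kernel of $\rho_{\ge i}\varphi$. Since $\rho_{\ge j}=\rho_{(i,j)}\rho_{\ge i}$ for $i<j$, the kernel universal property yields unique morphisms $\mu_{i,j}\colon N_i\to N_j$ with $\mu_j\mu_{i,j}=\mu_i$; each $\mu_{i,j}$ is a monomorphism because $\mu_i$ and $\mu_j$ are. Setting $\varphi_i=\rho_i\varphi$ and factoring $\rho_i$ through $\rho_{\ge i}$ immediately gives $\varphi_i\mu_i=0$, while $\varphi_i\ne 0$ by the choice of $\varphi$. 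That $(N,\{\mu_i\}_{i<\omega})$ is the colimit of the resulting $\omega$-spectrum follows from Lemma~\ref{sums}(2): the colimit of the morphisms $\rho_{\ge i}\varphi$ is the zero morphism $N\to 0$, and exactness of filtered colimits in an Ab5 category identifies $\mathrm{colim}_i\ker(\rho_{\ge i}\varphi)$ with $\ker(0)=N$, the canonical cocone maps being precisely the $\mu_i$.

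The harder direction $(3)\Rightarrow(1)$ requires building a morphism witnessing non-compactness from a family $\{\varphi_i\}$ that need not have finite support at each point. The plan is to use the universal property of the product to form the unique $\psi\in\Ac(N,M^\omega)$ with $\pi_i\psi=\varphi_i$, and then to lift $\psi$ into $M^{(\omega)}$ through the spectrum. For $j<i$ the cocone relation $\mu_j=\mu_i\mu_{j,i}$ combined with $\varphi_i\mu_i=0$ yields $\pi_i\psi\mu_j=\varphi_i\mu_j=0$, and this equality holds trivially for $j=i$ as well; hence $\psi\mu_j\colon N_j\to M^\omega$ factors through the subproduct $M^j\hookrightarrow M^\omega$ via some $\tilde\psi_j\colon N_j\to M^j$. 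Transporting along the canonical isomorphism $t_j\colon M^{(j)}\to M^j$ and composing with $\nu_{<j}\colon M^{(j)}\to M^{(\omega)}$, one obtains $\psi'_j=\nu_{<j}t_j^{-1}\tilde\psi_j$ satisfying $t\psi'_j=\psi\mu_j$; the compatibility $\psi'_k\mu_{j,k}=\psi'_j$ for $j<k$ then follows by cancelling the monomorphism $t$ from the identity $t\psi'_k\mu_{j,k}=\psi\mu_k\mu_{j,k}=\psi\mu_j=t\psi'_j$.

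The universal property of the colimit $(N,\{\mu_i\})$ produces a unique $\varphi\in\Ac(N,M^{(\omega)})$ with $\varphi\mu_i=\psi'_i$ for all $i$, and a second application---to the cocone into $M^\omega$---shows $t\varphi=\psi$. Since $\rho_i=\pi_i t$ by Lemma~\ref{Morph}(3), one concludes $\rho_i\varphi=\pi_i\psi=\varphi_i\ne 0$ for every $i<\omega$, and Theorem~\ref{N-comp}(4) gives that $N$ is not $M$-compact.
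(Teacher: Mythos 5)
Your proof is correct and follows essentially the same route as the paper's: kernels of the truncated morphisms $\rho_{\ge i}\varphi$ together with Ab5-exactness of filtered colimits for $(1)\Rightarrow(2)$, and assembling the $\varphi_i$ into a map to $M^\omega$ and restricting along the spectrum to finite subproducts for $(3)\Rightarrow(1)$. Your endgame for $(3)\Rightarrow(1)$ is in fact slightly cleaner than the paper's: by establishing $t\varphi=\psi$ and using that the coproduct-to-product morphism $t$ is monic, you read off $\rho_i\varphi=\varphi_i\ne 0$ directly, whereas the paper evaluates $\rho_i\varphi\mu_n$ and implicitly relies on $\varphi_i\mu_n\ne 0$ for some $n>i$.
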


\begin{proof} We will use the notation of Lemma~\ref{sums} throughout the whole proof.

 (1)$\Rightarrow$(2) Let $\varphi\in\Ac(N,M^{(\omega)})$
satisfying $\rho_i\varphi\ne 0$ for all $i<\omega$, which is ensured by (1) and  Theorem~\ref{N-comp}.
Furthermore, let us denote $\varphi_{\ge n}=\rho_{\ge n}\varphi $. Then 
$\rho_i\varphi=\rho_i\varphi_{\ge n}$ for all $i\ge n$. 
Now, for each $n<\omega$ denote by $(N_n,\mu_n)$ the kernel of the morphism $\varphi_{\ge n}$ and note that by the universal property of the kernel there exists a morphism $\mu_{n,n+1}\in \Ac(N_n, N_{n+1})$
such that all squares in the diagram with exact rows
\begin{equation*}
\xymatrix{ 
 0 \ar@{->}[r] &	N_n  \ar@{->}[r]^{\mu_n} \ar@{->}[d]_{\mu_{n,n+1}} & N \ar@{->}[r]^{\varphi_{\ge n}}\ar@{=}[d] & M^{([n,\omega))}   \ar@{->}[d]^{\rho_{(n,n+1)}} \\
 0 \ar@{->}[r] &	N_{n+1} \ar@{->}[r]^{\mu_{n+1}}  & N \ar@{->}[r]^{\varphi_{\ge n+1}}  & M^{([n+1,\omega))} 
}
\end{equation*}
commute. Now let us define inductively for each $n<m<\omega$ morphisms
\[
\mu_{n,m}:=\mu_{m-1,m}\mu_{m-2,m-1} \dots  \mu_{n+1,n+2}\mu_{n,n+1}\in \Ac(N_n, N_{m}).
\] 
If we denote by $(X,\{\xi_i\}_{i<\omega})$ the colimit of the $\omega$-spectrum $\Nc=(\{N_i\}_{i<\omega}, \{\mu_{i,j}\}_{i<j<\omega})$ then from Lemma~\ref{sums} we obtain the following commutative diagram with exact rows
\begin{equation*}
\xymatrix{ 
 0 \ar@{->}[r] &	N_n  \ar@{->}[r]^{\mu_n} \ar@{->}[d]_{\mu_{n,m}} & N \ar@{->}[r]^{\varphi_{\ge n}}\ar@{=}[d] & M^{([n,\omega))}   \ar@{->}[d]^{\rho_{(n,m)}} \\
 0 \ar@{->}[r] &	N_m \ar@{->}[r]^{\mu_{m}} \ar@{->}[d]_{\xi_{n}}  & N \ar@{->}[r]^{\varphi_{\ge m}} \ar@{=}[d] & M^{([m,\omega))} \ar@{->}[d]_{0}\\
 0 \ar@{->}[r] &	X \ar@{->}[r]^{\xi}  & N\ar@{->}[r]  & 0
}
\end{equation*}
because $\Ac$ is an Ab5-category. Thus $\xi$ is an isomorphism, which implies that $(N,\{\mu_i\}_{i<\omega})$ is a colimit  of the $\omega$-spectrum $\Nc$. Since all $\mu_n$'s are kernel morphisms, they are monomorphisms. Furthermore, $\mu_{n,m}$ are monomorphisms, because $\mu_{n}=\mu_{m}\mu_{n,m}$ for all $n<m<\omega$. Finally, put $\varphi_i:=\rho_i\varphi$, which is nonzero by the hypothesis, and compute
$\varphi_i\mu_i=\rho_i\varphi\mu_i=\rho_i\varphi_{\ge i}\mu_i=0$.

(2)$\Rightarrow$(3) This is clear.

(3)$\Rightarrow$(1) Let us denote by $\tau\in\Ac(N,M^\omega)$ the 
morphism satisfying $\pi_i\tau=\varphi_i$, which is (uniquely) given by the universal property of the product $M^\omega$. Recall that for each $n<\omega$ we have denoted by $\pi_{n}\in\Ac(M^\omega,M^n)$ the corresponding structural morphism  and we may identify objects $M^n$ and $M^{(n)}$ so we shall consider $\pi_{n}$ as a morphism in $\Ac(M^\omega,M^{(n)})$. 

Put $\tau_n:=\pi_{n}\tau\mu_n$. Since $\rho_i\tau_n=\rho_i\pi_{n}\tau\mu_n=\pi_i\pi_{n}\tau\mu_n$ we obtain that $\rho_i\tau_n=\varphi_i\mu_n\ne 0$ for each $i<n$ and  $\rho_i\tau_n=0$ for each $i\ge n$.
Then the diagram 
\begin{equation*}
\xymatrix{
	N_n \ar@{->}[r]^{\tau_n} \ar@{->}[d]_{\mu_{n,m}} & M^{(n)} 
	\ar@{->}[d]^{\nu_{(n,m)}} \\
	N_m  \ar@{->}[r]^{\tau_m}  & M^{(m)} } 
\end{equation*}
commutes for every $n<m<\omega$. Hence there exists $\varphi\in \Ac(N,M^{(\omega)})$ such that the diagram
\begin{equation*}
\xymatrix{
	N_n \ar@{->}[r]^{\tau_n} \ar@{->}[d]_{\mu_{n}} & M^{(n)} 
	\ar@{->}[d]^{\nu_{<n}} \\
	N  \ar@{->}[r]^{\varphi}  & M^{(\omega)} } 
\end{equation*}
commutes for each $n<\omega$ by Lemma~\ref{sums}, as $(N,\{\mu_i\}_{i<\omega})$ is the colimit  of the 
$\omega$-spectrum $(\{N_i\}_{i<\omega}, \{\mu_{i,j}\}_{i<j<\omega})$ and $(\{M^{(\omega)}\}_{i<\omega}, \{\mu_{(i)}\}_{i<\omega})$
is the colimit  of the 
$\omega$-spectrum $(\{M^{(i)}\}_{i<\omega}, \{\nu_{(i,j)}\}_{i<j<\omega})$ in the Ab5-category $\Ac$.

Applying Theorem~\ref{N-comp}, it is enough to prove that $\rho_i\varphi\ne 0$ for each $i<\omega$.
We have shown that $\rho_i\tau_n\ne 0$ for each $i<n$, hence
\[
\rho_i\varphi\mu_n = \rho_i\nu_{<n}\tau_n = \rho_i\tau_n\ne 0
\]
for each $i<n$, which implies $\rho_i\varphi\ne 0$.
\end{proof}

We are now ready to formulate a basic characterization of autocompact objects which generalizes the classical result \cite[Proposition 1.1]{A-M}. 

\begin{thm}\label{notAC}  Let $M$ be an object such that $\Ac$ is closed under  products $M^\lambda$ for all $\lambda\le\max(|\End_\Ac(M)|,\omega)$.
Then the following conditions are equivalent:
\begin{enumerate}
\item $M$ is not autocompact,
\item there exists an $\omega$-spectrum with colimit  $(M,\{\mu_i\}_{i<\omega})$ such that for each $i<\omega$ there exists a nonzero morphism $\varphi_i\in\End_\Ac(M)$ satisfying $\varphi_i\mu_i=0$,
\item there exists an $\omega$-spectrum $(\{M_i\}_{i<\omega}, \{\mu_{i,j}\}_{i<j<\omega})$ with colimit $(M,\{\mu_i\}_{i<\omega})$ such that $\{\Ic(M_i,\mu_i)\}_{i<\omega}$ forms a strictly decreasing chain of nonzero ideals of the ring $\End_\Ac(M)$ with $\bigcap_{i<\omega}\Ic(M_i,\mu_i)=0$.
\end{enumerate}
\end{thm}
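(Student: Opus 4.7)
The plan is to reduce the equivalence \textup{(1)}$\Leftrightarrow$\textup{(2)} directly to Lemma~\ref{notM-comp} applied with $N=M$; the hypothesis that $\Ac$ contains $M^\lambda$ for $\lambda\le\max(|\End_\Ac(M)|,\omega)$ is exactly what that lemma needs (with $\omega$ enough for its use). The remaining content is therefore the equivalence \textup{(2)}$\Leftrightarrow$\textup{(3)}, which I will handle by showing that the ideals $\Ic(M_i,\mu_i)$ attached to \emph{any} $\omega$-spectrum with colimit $M$ automatically form a decreasing chain with trivial intersection, and then trimming to a cofinal subsequence to force strict descent.

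For \textup{(3)}$\Rightarrow$\textup{(2)}: since each $\Ic(M_i,\mu_i)$ is nonzero, pick any $\varphi_i\in\Ic(M_i,\mu_i)\setminus\{0\}$; by the very definition of $\Ic(M_i,\mu_i)$ we have $\varphi_i\mu_i=0$, so the spectrum from \textup{(3)} already witnesses \textup{(2)}.

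For \textup{(2)}$\Rightarrow$\textup{(3)}: starting from the spectrum in \textup{(2)}, set $I_i:=\Ic(M_i,\mu_i)$. First, $I_i$ is nonzero because $\varphi_i\in I_i$. Next, the relation $\mu_j\mu_{i,j}=\mu_i$ (which holds since $(M,\{\mu_i\}_{i<\omega})$ is a cocone) gives, for $i\le j$ and $\iota\in I_j$, the equality $\iota\mu_i=\iota\mu_j\mu_{i,j}=0$, so $I_j\subseteq I_i$; thus $(I_i)_{i<\omega}$ is a decreasing chain. To see $\bigcap_{i<\omega}I_i=0$, take $\iota\in\End_\Ac(M)$ with $\iota\mu_i=0$ for every $i$; both $\iota$ and $0_M$ are then morphisms $M\to M$ that factor the zero cocone $\{0\colon M_i\to M\}_{i<\omega}$ through the colimit, so the universal property of $(M,\{\mu_i\}_{i<\omega})$ forces $\iota=0$.

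It remains to promote the decreasing chain to a strictly decreasing one. Since $\bigcap_{i<\omega}I_i=0$ and each $I_i\ne 0$, a routine inductive choice yields an increasing sequence $i_0<i_1<i_2<\cdots$ in $\omega$ such that $I_{i_0}\supsetneq I_{i_1}\supsetneq\cdots$: having selected $i_0<\cdots<i_n$, fix any nonzero $a\in I_{i_n}$; since $\bigcap_{k}I_{i_n+k}=0$, some index $i_{n+1}>i_n$ satisfies $a\notin I_{i_{n+1}}$, and then $I_{i_{n+1}}\subsetneq I_{i_n}$. The cofinal subsystem $(\{M_{i_k}\}_{k<\omega},\{\mu_{i_k,i_l}\}_{k<l<\omega})$ together with the morphisms $\mu_{i_k}$ remains an $\omega$-spectrum of $M$ because cofinal subdiagrams of filtered colimits reproduce the same colimit. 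This yields the spectrum required by \textup{(3)}. The only subtle point—and what I expect to be the main obstacle to write carefully—is the intersection argument in the previous paragraph: one must be sure the cocone $\{0\colon M_i\to M\}_{i<\omega}$ is genuinely a cocone over the spectrum (which reduces to $0\circ\mu_{i,j}=0$) so that uniqueness through the colimit can be invoked.
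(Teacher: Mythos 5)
Your proposal is correct and follows essentially the same route as the paper: (1)$\Leftrightarrow$(2) is delegated to Lemma~\ref{notM-comp} with $N=M$, the intersection $\bigcap_i\Ic(M_i,\mu_i)=0$ is obtained from the uniqueness clause of the colimit's universal property, and strict descent is achieved by passing to a cofinal subspectrum (your explicit inductive selection is just a more detailed version of the paper's set $J=\{j\mid \Ic(M_j,\mu_j)\ne\Ic(M_{j+1},\mu_{j+1})\}$, and your choice of an arbitrary nonzero $\varphi_i\in\Ic(M_i,\mu_i)$ for (3)$\Rightarrow$(2) works just as well as the paper's $\varphi_i\in\Ic(M_i,\mu_i)\setminus\Ic(M_{i+1},\mu_{i+1})$). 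No gaps.
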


\begin{proof} 
(1)$\Leftrightarrow$(2) follows form Lemma~\ref{notM-comp} for $N=M$.

(2)$\Rightarrow$(3) If $(M,\{\mu_i\}_{i<\omega})$ is the  colimit which exists by (2), then $\{\Ic(M_i,\mu_i)\}_{i<\omega}$ is  a decreasing chain of nonzero ideals of $\End_\Ac(M)$. Suppose that $\gamma\mu_i=0$ for all $i<\omega$. Then  $\gamma =0$, since there exists unique such morphism by the universal property of the colimit $(M,\{\nu_i\}_{i<\omega})$. Thus $\bigcap_{i<\omega}\Ic(M_i,\mu_i)=0$ and $\Ic(M_i,\mu_i)\ne 0$ for each $i$. If we put 
$J=\{j<\omega\mid \Ic(M_j,\mu_j)\ne\Ic(M_{j+1},\mu_{j+1})\}$, then it is easy to see that $(M,\{\mu_j\}_{j\in J})$ is the colimit of the $\omega$-spectrum  $(\{M_j\}_{j\in J}, \{\mu_{i,j}\}_{i<j\in J})$ with 
a strictly decreasing chain of nonzero ideals $\{\Ic(M_j,\mu_j)\}_{j\in J}$.

(3)$\Rightarrow$(2) It is enough to choose $\varphi_i\in \Ic(M_i,\mu_i)\setminus \Ic(M_{i+1},\mu_{i+1})$.
\end{proof}

The following criterion of autocompactness of finite coproducts generalizes results \cite[Proposition 5, Corollary 6]{Dvo2015} formulated in categories of modules.

\begin{prop}\label{finite_sums} The following conditions are equivalent for a finite family of  objects $\M$ and $M=\bigoplus \M$:
 \begin{enumerate}
\item $M$ is autocompact,
\item $N$ is $M$-compact for each $N\in \M$,
\item $M$ is $N$-compact for each $N\in \M$,
\item $N_1$ is $N_2$-compact for each $N_1,N_2\in \M$,
\item for each  $N_1,N_2\in \M$ and any $\omega$-spectrum $(\{K_i\}_{i<\omega}, \{\mu_{i,j}\}_{i<j<\omega})$ of $N_1$ with colimit  $(N_1,\{\mu_i\}_{i<\omega})$ and for each $i<\omega$ and nonzero $\varphi\in \Ac(N_1,N_2)$, the morphism $\varphi \mu_i$ is nonzero.
\end{enumerate}
\end{prop}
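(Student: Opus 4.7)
The plan is to route the equivalences (1)--(4) through a single application of Proposition~\ref{sum-compact} and then to derive (4) $\Leftrightarrow$ (5) from Lemma~\ref{notM-comp}.

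First I would note that (1) $\Leftrightarrow$ (4) is just the instance of Proposition~\ref{sum-compact} with $\Nc = \M$: autocompactness of $M = \bigoplus\M$ is by definition $\bigoplus\M$-compactness of $\bigoplus\M$, which the proposition converts into the pairwise condition ``$N_1$ is $N_2$-compact for all $N_1, N_2 \in \M$'', i.e., (4). The equivalences (2) $\Leftrightarrow$ (4) and (3) $\Leftrightarrow$ (4) follow from the same proposition applied with a singleton family on one side: reading ``$N$ is $M$-compact'' as ``$\bigoplus\{N\}$ is $\bigoplus\M$-compact'' gives ``$N$ is $N'$-compact for all $N' \in \M$'', and symmetrically ``$M$ is $N$-compact'' becomes ``$M'$ is $N$-compact for all $M' \in \M$''. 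Quantifying $N$ over the finite family $\M$ in each case recovers (4).

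For (4) $\Leftrightarrow$ (5) I would apply Lemma~\ref{notM-comp} separately to each ordered pair $(N_1, N_2) \in \M \times \M$, taking $N = N_1$ and $M = N_2$ in its statement. The contrapositive of the equivalence (1) $\Leftrightarrow$ (3) of that lemma says that $N_1$ is $N_2$-compact iff no $\omega$-spectrum of $N_1$ with colimit $(N_1, \{\mu_i\}_{i<\omega})$ admits, for every $i < \omega$, a nonzero morphism $\varphi_i \in \Ac(N_1, N_2)$ with $\varphi_i \mu_i = 0$. Unpacking this pointwise and quantifying over $(N_1, N_2) \in \M \times \M$ delivers the equivalence with (4).

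The chief delicate point I anticipate is the careful bookkeeping of quantifiers when passing between Lemma~\ref{notM-comp}(3) and (5): the lemma witnesses failure of $N_2$-compactness of $N_1$ by the \emph{joint} existence of an $\omega$-spectrum and a family of annihilated nonzero morphisms $\varphi_i$ ranging over all $i < \omega$, and (5) must accordingly be parsed as the negation of precisely this simultaneous existential. Once this matching is made explicit, the remainder of the argument is a mechanical assembly of the already-established machinery.
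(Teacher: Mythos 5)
Your proposal is correct and follows essentially the same route as the paper: the equivalences (1)--(4) are all obtained from Proposition~\ref{sum-compact} (the paper instantiates it with the pairs $\{M\},\M$ and $\M,\{M\}$ where you use $\{N\},\M$ and $\M,\{N\}$ per element, a cosmetic difference), and (4)$\Leftrightarrow$(5) is read off from Lemma~\ref{notM-comp}. Your explicit remark on how (5) must be parsed as the negation of the simultaneous existential in Lemma~\ref{notM-comp}(3) is a point the paper leaves implicit, and is handled correctly.
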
 

\begin{proof} 
(1)$\Leftrightarrow$(4) This is proved in Proposition~\ref{sum-compact}  

(2)$\Leftrightarrow$(3)$\Leftrightarrow$(4) These equivalences follow from Proposition~\ref{sum-compact} again, when applied on pairs of families $\{M\},\M$ and $\M, \{M\}$.

(4)$\Leftrightarrow$(5) This is an immediate consequence of Lemma~\ref{notM-comp}.
\end{proof}	

As a consequence, we can formulate the assertion of Corollary~\ref{M^k} more precisely.

\begin{cor}\label{Oplus} Let $\M$ be a family of nonzero objects. Then $\bigoplus\M$ is autocompact if and only if  $\M$ is finite and $N_1$ is $N_2$-compact for each $N_1,N_2\in \M$. 
\end{cor}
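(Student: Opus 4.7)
The plan is to combine two results that have already been established: Corollary~\ref{sum-compact0}, which handles forcing $\M$ to be finite, and Proposition~\ref{finite_sums}, which handles the equivalence in the finite case. So the proof will split into the two implications and use each of these tools once.

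For the implication $(\Leftarrow)$, assume $\M$ is finite and $N_1$ is $N_2$-compact for all $N_1,N_2\in\M$. Then condition (4) of Proposition~\ref{finite_sums} holds for $\M$, and the equivalence (4)$\Leftrightarrow$(1) of that proposition yields directly that $\bigoplus\M$ is autocompact. This direction is immediate.

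For the implication $(\Rightarrow)$, assume $\bigoplus\M$ is autocompact, i.e.\ $\bigoplus\M$ is $\bigoplus\M$-compact. The first step is to argue that $\M$ must be finite. Suppose for contradiction that $\M$ is infinite; since every member of $\M$ is nonzero by hypothesis, $\M$ contains infinitely many nonzero objects. Taking $\Nc=\M$ in Corollary~\ref{sum-compact0}, we conclude that $\bigoplus\M$ is not $\bigoplus\M$-compact, contradicting autocompactness. Hence $\M$ is finite. The second step is then to invoke the equivalence (1)$\Leftrightarrow$(4) of Proposition~\ref{finite_sums}, which applies because $\M$ is now finite, to conclude that $N_1$ is $N_2$-compact for all $N_1,N_2\in\M$.

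There is no real obstacle in this proof: both halves are short applications of previously established statements. The only subtlety worth flagging is the use of the hypothesis that all objects of $\M$ are nonzero, which is exactly what is needed to invoke Corollary~\ref{sum-compact0} with $\Nc=\M$ in order to rule out an infinite index family.
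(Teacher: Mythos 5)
Your proof is correct and matches the paper's intended argument: the paper states Corollary~\ref{Oplus} as an immediate consequence of Proposition~\ref{finite_sums}, with the infinite case excluded exactly by Corollary~\ref{sum-compact0} applied with $\Nc=\M$. You have also correctly identified where the nonzero hypothesis is used, so nothing is missing.
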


The last direct consequence of Proposition~\ref{finite_sums} presents a categorial variant of  \cite[Corollary 7]{Dvo2015}.

\begin{cor}\label{fin} Let $\M$ be a finite family of autocompact objects
satisfying the condition $\Ac(N_1, N_2)=0$ whenever $N_1\ne N_2$. Then $\bigoplus\M$ is autocompact.
\end{cor}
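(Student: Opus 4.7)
The plan is to reduce directly to Proposition~\ref{finite_sums}, which tells us that $\bigoplus\M$ is autocompact if and only if $N_1$ is $N_2$-compact for every pair $N_1,N_2\in\M$. So it suffices to verify pairwise relative compactness for all pairs drawn from $\M$.

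I would split into two cases. First, if $N_1=N_2$, then $N_1$ is $N_2$-compact simply because $N_1$ is autocompact by hypothesis. Second, if $N_1\neq N_2$, then by assumption $\Ac(N_1,N_2)=0$, and Example~\ref{ex1}(1) immediately gives that $N_1$ is $N_2$-compact (any morphism from $N_1$ into a copower of $N_2$ must vanish, hence trivially factors through the zero subcoproduct).

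Thus all pairwise relative compactness conditions of Proposition~\ref{finite_sums}(4) are met, and the conclusion follows. There is no real obstacle here: the statement is essentially just a combination of the pairwise criterion from the previous proposition with the trivial zero-hom observation from Example~\ref{ex1}(1), so the proof is a one-line application of each.
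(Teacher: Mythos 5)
Your proof is correct and matches the paper's intent exactly: the corollary is stated there as a direct consequence of Proposition~\ref{finite_sums}, verified via condition (4) by splitting pairs into the diagonal case (handled by autocompactness) and the off-diagonal case (handled by the zero-hom observation of Example~\ref{ex1}(1)). Nothing further is needed.
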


If $\M$ is a finite family of objects, then $\bigoplus\M$ and $\prod\M$ are canonically isomorphic (cf. Lemma~\ref{StructMorph}), so the Proposition~\ref{finite_sums} holds true in case we replace any $\bigoplus$ by $\prod$ there. Although there is no autocompact coproduct of infinitely many nonzero objects by Corollary~\ref{sum-compact0}, the natural question that arises is, under which conditions the products of infinite families of objects are autocompact. The following example shows that the straightforward generalization of the claim does not hold true in general.

\begin{exm}\label{torsion} Denote by $\mathbb P$ the set of all prime numbers and consider the full subcategory $\mathcal{T}$ of the category of abelian groups $Ab$ consisting of all torsion abelian groups. If $A$ is a torsion abelian group and $A_{p}$ denotes its $p$-component for each $p\in\mathbb P$, then the decomposition $\bigoplus_{p\in\mathbb P}A_p$ forms both the coproduct and product of the family $\Ac=\{A_p\mid p\in\mathbb P\}$. 
Indeed, if $B$ is a torsion abelian group and $\tau_p\in Ab(B,A_p)$ for $p\in\mathbb P$, then for every $b\in B$ there exist only finitely many $p\in P$ for which $\tau_p(b)\ne 0$, hence the image of the homomorphism $f\in Ab(B,\prod_p A_p)$ given by the universal property of the product $\prod_p A_p$ is contained in $\bigoplus_{p\in\mathbb P}A_p$, hence $\bigoplus_{p\in\mathbb P}A_p$ is the  product of $\Ac$ in the category $\Tc$.

Thus, e.g. $\bigoplus_{p\in\mathbb P}\Z_p$ is the product of the family $\{\Z_p\mid p\in\mathbb P\}$ in $\Tc$, which is not autocompact in $\Tc$ by Corollary~\ref{Oplus}, however $\Z_p$ is $\Z_q$-compact for every $p,q\in\mathbb P$.
\end{exm}

\section{Which products are autocompact?}

Although the final section tries to answer the question formulated in its title, we start with one more closure property.

\begin{lm}\label{extensions}
If $0\to A\to B\to C\to 0$ is a short exact sequence such that an object $M$ is $A$-compact and $C$-compact, then it is $B$-compact.
\end{lm}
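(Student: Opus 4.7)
The plan is to verify the $B$-compactness of $M$ via Theorem~\ref{N-comp}(4): for every $\varphi \in \Ac(M, B^{(\omega)})$ I must produce some $\alpha < \omega$ such that $\rho_\alpha \circ \varphi = 0$. Denote the given sequence by $0 \to A \stackrel{i}{\to} B \stackrel{p}{\to} C \to 0$, and for any index set $L$ write $i^{(L)}, p^{(L)}$ for the morphisms on coproducts induced by the universal property. Since $\Ac$ is Ab5, arbitrary coproducts preserve exactness (coproducts being filtered colimits of finite biproducts), so $0 \to A^{(L)} \to B^{(L)} \to C^{(L)} \to 0$ remains short exact for every $L$.

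Given $\varphi$, apply the $C$-compactness of $M$ to $p^{(\omega)} \circ \varphi$: by Theorem~\ref{N-comp}(2) there is a finite $F \subset \omega$ and $f' \in \Ac(M, C^{(F)})$ with $p^{(\omega)} \circ \varphi = \nu_F \circ f'$. Set $L = \omega \setminus F$ and $g = \rho_L \circ \varphi \in \Ac(M, B^{(L)})$. Naturality of the induced morphisms yields $p^{(L)} \circ \rho_L = \rho_L \circ p^{(\omega)}$ (the two occurrences of $\rho_L$ referring to the associated morphisms for $B$ and $C$ respectively), and Lemma~\ref{StructMorph}(1) gives $\rho_L \circ \nu_F = 0$. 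Combining these, $p^{(L)} \circ g = 0$. Exactness of the sequence $0 \to A^{(L)} \to B^{(L)} \to C^{(L)} \to 0$ then produces a unique $h \in \Ac(M, A^{(L)})$ with $g = i^{(L)} \circ h$.

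Since $|L| = \omega$, identify $A^{(L)}$ with $A^{(\omega)}$ by any bijection. The $A$-compactness of $M$ applied to $h$ via Theorem~\ref{N-comp}(4) then yields some $\beta \in L$ for which the associated morphism $\bar{\rho}_\beta$ of the subcoproduct over $L$ satisfies $\bar{\rho}_\beta \circ h = 0$. Using the identity $\rho_\beta = \bar{\rho}_\beta \circ \rho_L$ for $\beta \in L$ from Lemma~\ref{Morph}(2) together with the naturality $\bar{\rho}_\beta \circ i^{(L)} = i \circ \bar{\rho}_\beta$ of the induced morphism on $L$-coproducts, I obtain
\[
\rho_\beta \circ \varphi \;=\; \bar{\rho}_\beta \circ \rho_L \circ \varphi \;=\; \bar{\rho}_\beta \circ g \;=\; \bar{\rho}_\beta \circ i^{(L)} \circ h \;=\; i \circ \bar{\rho}_\beta \circ h \;=\; 0,
\]
which gives the required vanishing.

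The only delicate point is the exactness of the coproduct sequences $0 \to A^{(L)} \to B^{(L)} \to C^{(L)} \to 0$, where the Ab5 hypothesis on $\Ac$ is used in an essential way; the rest of the argument is careful bookkeeping of naturality squares for the induced morphisms $i^{(L)}, p^{(L)}$ together with the identities for structural and associated morphisms already recorded in Lemmas~\ref{Morph} and~\ref{StructMorph}.
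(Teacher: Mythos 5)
Your proof is correct, but it takes a genuinely different route from the one in the paper. You argue directly: given $\varphi\in\Ac(M,B^{(\omega)})$, you use $C$-compactness (Theorem~\ref{N-comp}(2)) to factor $p^{(\omega)}\circ\varphi$ through a finite subcoproduct $C^{(F)}$, pass to the cofinite complement $L$, use exactness of the coproduct sequence $0\to A^{(L)}\to B^{(L)}\to C^{(L)}\to 0$ to lift $\rho_L\circ\varphi$ to $h\in\Ac(M,A^{(L)})$, and then apply $A$-compactness to $h$ to kill one coordinate $\rho_\beta\circ\varphi$; the naturality identities you invoke all follow from the uniqueness clauses in Lemmas~\ref{Morph} and~\ref{StructMorph}. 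The paper instead argues by contradiction through Lemma~\ref{notM-comp}: a witness of non-$B$-compactness is an $\omega$-spectrum with colimit $M$ and nonzero $\varphi_i\in\Ac(M,B)$ with $\varphi_i\mu_i=0$; composing with $\beta\in\Ac(B,C)$ and using $C$-compactness forces $\beta\varphi_i=0$ eventually, so the $\varphi_i$ factor through the kernel $\alpha\in\Ac(A,B)$ as nonzero morphisms, contradicting $A$-compactness. The paper's route is shorter because the spectrum formalism absorbs the bookkeeping, but it leans on Lemma~\ref{notM-comp}, whose proof uses the full Ab5 condition on colimits of $\omega$-spectra. Your argument only needs exactness of (countable) coproducts of short exact sequences --- a weaker consequence of Ab5 --- plus the factorization criteria of Theorem~\ref{N-comp}, so it is somewhat more elementary and would survive in any cocomplete abelian category with exact coproducts, provided the cited equivalences of Theorem~\ref{N-comp} are available there. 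One small presentational point: the identity you quote as $\rho_\beta=\bar{\rho}_\beta\circ\rho_L$ is indeed what Lemma~\ref{Morph}(2) intends (the displayed formula $\rho_L\circ\orho_j=\rho_j$ there only typechecks with the composition read in the opposite order), so it is worth stating explicitly that you are using $\orho_j\circ\rho_L=\rho_j$ for $j\in L$.
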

\begin{proof} Proving indirectly, assume that $M$ is not $B$-compact.
Then by Lemma~\ref{notM-comp} there exists a colimit $(M,\{\mu_i\}_{i<\omega})$ of some $\omega$-spectrum $(\{M_i\}_{i<\omega}, \{\mu_{i,j}\}_{i<j<\omega})$ and nonzero morphisms $\varphi_i\in\Ac(M,B)$ such that $\varphi_i\mu_i=0$, $i<\omega$.
If we suppose that $M$ is $C$-compact and  consider the
short exact sequence
\begin{equation*}
\xymatrix{ 
0 \ar@{->}[r] &	A  \ar@{->}[r]^{\alpha} & B \ar@{->}[r]^{\beta} & C \ar@{->}[r] &0,
}
\end{equation*}
then $\beta\varphi_i\mu_i=0$ for each $i\in\omega$, hence there exists $n$ such that $\beta\varphi_i=0$ for all $i\ge n$ by Lemma~\ref{notM-comp}.
By the universal property of  the kernel $\alpha$ of (the cokernel) $\beta$ there exist $\psi_i$ satisfying $\alpha\psi_i=\varphi_i\ne 0$ for each $i\ge n$. As $\alpha$ is a monomorphism, $\psi_i\ne 0$ for each $i\ge n$, hence $M$ is not $A$-compact by Lemma~\ref{notM-comp} again, a contradiction.
\end{proof}

\begin{cor} If $0\to A\to B\to C\to 0$ is a short exact sequence such that the object $B$ is $A$-compact and $C$-compact, then $B$ is autocompact.
\end{cor}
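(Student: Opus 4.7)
The plan is almost trivial once Lemma~\ref{extensions} is in hand: the corollary is simply the specialization of that lemma to the case $M=B$. Explicitly, I would take the given short exact sequence $0\to A\to B\to C\to 0$ and apply Lemma~\ref{extensions} with the ``test object'' $M$ chosen to be $B$ itself. The hypotheses of the lemma — that $M$ is both $A$-compact and $C$-compact — are exactly the hypotheses of the corollary, so they are immediately satisfied.

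The conclusion of Lemma~\ref{extensions} then yields that $M=B$ is $B$-compact. Unwinding the definition of autocompactness given in Section~2 (an object is autocompact precisely when it is $M$-compact with respect to itself), this is exactly the statement that $B$ is autocompact.

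No obstacle is expected: the work was already done in the proof of Lemma~\ref{extensions}, which handled the general two-out-of-three closure property via the criterion of Lemma~\ref{notM-comp} (lifting a bad $\omega$-spectrum witnessing non-$B$-compactness through the kernel $\alpha$ of $\beta$). The corollary just applies this with the domain of the hom-functor specialized to $B$, so the proof consists of a single invocation.
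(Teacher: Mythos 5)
Your proposal is correct and is exactly the intended derivation: the paper states this as an immediate corollary of Lemma~\ref{extensions}, obtained by specializing the test object $M$ to $B$ itself, whence $B$ is $B$-compact, i.e.\ autocompact. Nothing further is needed.
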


The previous corollary is a partial answer to the concluding question raised in \cite{Dvo2015}. As the next example shows, its assertion cannot be reversed.

\begin{exm} If we consider the  short exact sequence $0\to \Z\to \Q\to \Q/\Z\to 0$ in the category of abelian groups, then $\Q$ is self-small, i.e. autocompact abelian group and $\Z$-compact, but it is not $\Q/\Z$-compact.
\end{exm}

Now, we can formulate a criterion for autocompact objects which generalizes  \cite[Theorem 3.1]{DZ21}.

\begin{thm}\label{criterion}  Let $\M$ be a family of objects such that  the product $M=\prod\M$ exists in $\Ac$ and put $S=\bigoplus\M$.
Then the following conditions are equivalent:
\begin{enumerate}
\item $M$ is autocompact,
\item $M$ is $S$-compact,
\item $M$ is $\bigoplus\Cc$-compact for each countable family $\Cc\subseteq \M$.
\end{enumerate}
\end{thm}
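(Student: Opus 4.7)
I would establish $(1) \Leftrightarrow (2)$ and $(2) \Leftrightarrow (3)$ in the two easy and two hard directions separately. The downward directions $(1) \Rightarrow (2)$ and $(2) \Rightarrow (3)$ are immediate applications of Lemma~\ref{Autocompact1}: the compatible coproduct-to-product morphism $t : S \to M$ is a monomorphism in the Ab5-category $\Ac$ (see the remark following Lemma~\ref{StructMorph}), and for any countable $\Cc \subseteq \M$ the structural morphism $\nu_\Cc : \bigoplus\Cc \to S$ is a split monomorphism by Lemma~\ref{StructMorph}(1). The substance of the theorem lies in the reverse implications, which share a common pattern: one starts from a witness of non-compactness provided by Lemma~\ref{notM-comp} and \emph{projects} it onto a smaller target.

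For $(2) \Rightarrow (1)$, assume contrapositively that $M$ is not autocompact. Lemma~\ref{notM-comp} yields an $\omega$-spectrum with colimit $(M, \{\mu_i\}_{i<\omega})$ and nonzero endomorphisms $\varphi_i \in \End_\Ac(M)$ with $\varphi_i \mu_i = 0$. Since each $\varphi_i : M \to \prod\M$ is nonzero, the universal property of the product furnishes some $N_i \in \M$ with $\pi_{N_i} \varphi_i \neq 0$. I would set $\psi_i := \nu_{N_i} \pi_{N_i} \varphi_i \in \Ac(M, S)$; the monomorphism $\nu_{N_i}$ forces $\psi_i \neq 0$, while obviously $\psi_i \mu_i = 0$. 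Lemma~\ref{notM-comp} applied in the opposite direction then gives that $M$ is not $S$-compact, contradicting (2).

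The implication $(3) \Rightarrow (2)$ is a repetition of the same idea one level lower. If $M$ is not $S$-compact, Lemma~\ref{notM-comp} furnishes an $\omega$-spectrum with colimit $(M, \{\mu_i\}_{i<\omega})$ and nonzero $\psi_i : M \to S$ with $\psi_i \mu_i = 0$. Composing with the monomorphism $t : S \to M$ and using the identity $\pi_N t = \rho_N$ from Lemma~\ref{StructMorph}(2), I would extract $N_i \in \M$ with $\rho_{N_i} \psi_i \neq 0$. The countable family $\Cc := \{N_i : i < \omega\}$ together with the associated morphism $\rho_\Cc : S \to \bigoplus\Cc$ (which satisfies $\orho_{N_i} \rho_\Cc = \rho_{N_i}$, checked against each coproduct inclusion $\nu_M$ and transferred globally by the universal property) gives nonzero morphisms $\rho_\Cc \psi_i : M \to \bigoplus\Cc$ with $\rho_\Cc \psi_i \mu_i = 0$. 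Lemma~\ref{notM-comp} therefore shows $M$ is not $\bigoplus\Cc$-compact, contradicting (3).

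The principal obstacle is that $M = \prod\M$ does not in general admit a monomorphism into any power of $S = \bigoplus\M$, so one cannot simply invoke Lemma~\ref{Autocompact1} to reverse the easy directions. This forces one to leave the factorization perspective of Theorem~\ref{N-comp} and work with the $\omega$-spectrum characterization of non-compactness provided by Lemma~\ref{notM-comp}. The decisive flexibility that characterization provides is that witnesses $\varphi_i$ (respectively $\psi_i$) with variable target can always be projected coordinate-by-coordinate onto a single factor $N_i$, or reassembled inside a countable subcoproduct, without destroying the crucial annihilation condition $\varphi_i \mu_i = 0$.
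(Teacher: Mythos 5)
Your argument is correct, and its substance coincides with the paper's: the only genuinely hard step is handled, exactly as in the paper's proof of $(3)\Rightarrow(1)$, by taking the witness $(M,\{\mu_i\}_{i<\omega})$ with nonzero $\varphi_i$ satisfying $\varphi_i\mu_i=0$ supplied by Lemma~\ref{notM-comp}, choosing coordinates $N_i\in\M$ with $\pi_{N_i}\varphi_i\neq 0$, reassembling the projected morphisms inside the smaller target, and applying Lemma~\ref{notM-comp} in the reverse direction. The differences are organizational. The paper closes the cycle $(1)\Rightarrow(2)\Rightarrow(3)\Rightarrow(1)$ and therefore runs this projection argument only once, passing directly from failure of autocompactness to the countable subfamily $\Cc=\{N_i\mid i<\omega\}$; you run it twice, once for $(2)\Rightarrow(1)$ (stopping at $S$) and once for $(3)\Rightarrow(2)$ (using $\pi_N\circ t=\rho_N$ from Lemma~\ref{StructMorph}(2) and the identity $\orho_{N_i}\circ\rho_\Cc=\rho_{N_i}$, both of which check out). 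For the easy implications you invoke Lemma~\ref{Autocompact1} with $\lambda=1$ and the monomorphisms $t\in\Ac(S,M)$ and $\nu_\Cc\in\Ac(\bigoplus\Cc,S)$, whereas the paper routes $(1)\Rightarrow(2)$ through $S\in\Add(M)$ via Lemma~\ref{Autocompact3} and $(2)\Rightarrow(3)$ through Proposition~\ref{sum-compact}; these are interchangeable. One caveat you share with the paper's own proof rather than introduce: the implication $(3)\Rightarrow(1)$ of Lemma~\ref{notM-comp} with target $S$ or $\bigoplus\Cc$ tacitly presupposes that the countable powers $S^\omega$ and $(\bigoplus\Cc)^\omega$ exist in $\Ac$, which is not among the stated hypotheses of Theorem~\ref{criterion}; this is harmless in a complete (e.g.\ Grothendieck) category but would be worth flagging explicitly.
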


\begin{proof} (1)$\Rightarrow$(2) Since $M$ is $\Add(M)$-compact by Lemma~\ref{Autocompact3} and $S=\bigoplus\M\in\Add(M)$, it is $S$-compact by Lemma~\ref{inclusion-comp}.

(2)$\Rightarrow$(3) This is an easy consequence of Proposition~\ref{sum-compact}.

(3)$\Rightarrow$(1) 
Assume on contrary that $M$ is not autocompact. Then by Lemma~\ref{notM-comp} there exist an $\omega$-spectrum $(\{M_i\}_{i<\omega}, \{\mu_{i,j}\}_{i<j<\omega})$ of $M$ with the colimit $(M,\{\mu_i\}_{i<\omega})$ such that $\mu_i$ is a monomorphism for all $i<\omega$ and for each $i<\omega$ there exists a nonzero morphism $\varphi_i\in \Ac(M,M)$ with $\varphi_i\mu_i=0$.
Then for each $i<\omega$ there exists $N_i\in \M$ such that $\pi_{N_i}\varphi_i\ne 0$.
Put $\Cc=\{N_i\mid i<\omega\}$ and denote by $\tilde{\nu}_{{N_i}}$ the structural morphisms of the coproduct $\bigoplus\Cc$. Since
$\tilde{\nu}_{N_i}\pi_{N_i}\varphi_i\in\Ac(M,\bigoplus\Cc)$ such that $\tilde{\nu}_{N_i}\pi_{N_i}\varphi_i\mu_i=0$, there exists $n$ for which  $\tilde{\nu}_{N_n}\pi_{N_n}\varphi_n=0$ by Lemma~\ref{notM-comp}, which contradicts the hypothesis $\pi_{N_i}\varphi_i\ne 0$ for each $i<\omega$.
\end{proof}

\begin{cor}\label{powerAC} Let  $M$ be an object and $I$ be a set. Then $M^I$ is autocompact if and only if $M^I$ is $M$-compact.
\end{cor}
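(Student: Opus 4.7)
The plan is to deduce this as a direct specialization of Theorem~\ref{criterion} applied to the constant family $\M = \{M_i \mid i \in I\}$ with $M_i = M$ for every $i \in I$. Under this convention $\prod\M = M^I$ and $\bigoplus\M = M^{(I)}$, so Theorem~\ref{criterion} immediately tells us that $M^I$ is autocompact if and only if $M^I$ is $M^{(I)}$-compact, and the task reduces to trading $M^{(I)}$-compactness for $M$-compactness (trivial cases $I = \emptyset$ or $M = 0$ are vacuous).

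For the forward implication, assume $M^I$ is autocompact, i.e.\ $M^I$-compact. Lemma~\ref{Morph}(2) supplies, for any $i \in I$, a split monomorphism $\mu_i : M \to M^I$ (since $\pi_i \circ \mu_i = 1_M$). Applying Lemma~\ref{Autocompact1} with $B = M^I$, $A = M$ and $\lambda = 1$ then yields that $M^I$ is $M$-compact.

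For the reverse implication, suppose $M^I$ is $M$-compact. Taking the singleton class $\Cc = \{M\}$ in Lemma~\ref{Autocompact3}, the equivalence (2)$\Leftrightarrow$(4) gives that $M^I$ is $\Add(M)$-compact. Since $M^{(I)}$ is by construction a coproduct of copies of $M$, we have $M^{(I)} \in \Add(M)$, so Lemma~\ref{inclusion-comp} implies that $M^I$ is $M^{(I)}$-compact. Invoking the equivalence (1)$\Leftrightarrow$(2) of Theorem~\ref{criterion} for the constant family $\M$ then concludes that $M^I$ is autocompact.

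There is no real obstacle here; the only point to be mindful of is aligning notation so that the family $\M$ of $|I|$ copies of $M$ really has $\prod\M = M^I$ and $\bigoplus\M = M^{(I)}$ in the sense of the paper's conventions, after which Lemma~\ref{Autocompact1}, Lemma~\ref{Autocompact3}, Lemma~\ref{inclusion-comp}, and Theorem~\ref{criterion} chain together without further work.
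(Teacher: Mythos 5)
Your proof is correct and follows the route the paper intends: the corollary is stated without proof as a direct specialization of Theorem~\ref{criterion} to the constant family, and your careful passage between $M^{(I)}$-compactness and $M$-compactness via Lemma~\ref{Autocompact1} (resp.\ Lemma~\ref{Autocompact3} and Lemma~\ref{inclusion-comp}) is exactly the bookkeeping the authors leave implicit.
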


Let us make a categorial observation about transfer of $\omega$-spectra via morphisms.

\begin{lm}\label{subspectrum} Let $G$ and $M$ be objects of $\Ac$ and $\alpha\in\Ac(G,M)$.
If $(\{M_i\}_{i<\omega}, \{\mu_{i,j}\}_{i<j<\omega})$ is an $\omega$-spectrum of $M$ with
the colimit $(M,\{\mu_i\}_{i<\omega})$ such that all $\mu_i$'s are monomorphisms,
\begin{enumerate}
\item then there exists an $\omega$-spectrum $(\{G_i\}_{i<\omega}, \{\gamma_{i,j}\}_{i<j<\omega})$ of $G$ with the colimit $(G,\{\gamma_i\}_{i<\omega})$ where $\gamma_i$ are monomorphisms for all $i$ and there exist morphisms $\alpha_i\in\Ac(G_i,M_i)$ such that the diagram
\begin{equation*}
\xymatrix{
	G_i \ar@{->}[r]^{\gamma_i} \ar@{->}[d]_{\alpha_i} & G
	\ar@{->}[d]^{\alpha} \\
	M_i  \ar@{->}[r]^{\mu_i}  & M } 
\end{equation*}
commutes for each $i<\omega$.
\item If $G$ is $A$-compact for an object $A$ and $t_i\in\Ac(M,A)$ are morphisms satisfying $t_i\mu_i=0$ for each $i<\omega$, then there exists $n$ such that $t_i\alpha=0$ for each $i\ge n$.
\end{enumerate}
\end{lm}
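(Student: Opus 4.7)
The plan is to construct the $\omega$-spectrum over $G$ by pulling back the given spectrum of $M$ along $\alpha$, and then to derive (2) from the resulting commutative squares together with the characterization of non-$A$-compactness supplied by Lemma~\ref{notM-comp}.

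For (1), for each $i<\omega$ I would form the pullback $(G_i,\gamma_i,\alpha_i)$ of the cospan $\alpha\colon G\to M$, $\mu_i\colon M_i\to M$, yielding the required commutative square $\alpha\gamma_i=\mu_i\alpha_i$. Since monomorphisms are stable under pullback, $\gamma_i$ is a monomorphism because $\mu_i$ is. For $i<j$, the identity $\alpha\gamma_i=\mu_j(\mu_{i,j}\alpha_i)$ triggers the universal property of $G_j$ to produce a unique $\gamma_{i,j}\colon G_i\to G_j$ satisfying $\gamma_j\gamma_{i,j}=\gamma_i$ and $\alpha_j\gamma_{i,j}=\mu_{i,j}\alpha_i$; the cocycle identities $\gamma_{j,k}\gamma_{i,j}=\gamma_{i,k}$ follow from the uniqueness clause, and each $\gamma_{i,j}$ is mono because its composite with $\gamma_j$ equals the monomorphism $\gamma_i$. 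The decisive step is to identify $(G,\{\gamma_i\})$ with the colimit of the resulting $\omega$-spectrum; here the Ab5 hypothesis enters through the standard fact that filtered colimits commute with finite limits, so
\[
\mathrm{colim}_i G_i = \mathrm{colim}_i(G\times_M M_i) \simeq G\times_M\mathrm{colim}_i M_i = G\times_M M \simeq G,
\]
with the induced structural morphisms canonically identified with the $\gamma_i$ via the projection from the pullback.

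For (2), I would use the square from (1) to compute $t_i\alpha\gamma_i=t_i\mu_i\alpha_i=0$ for every $i<\omega$. Setting $s_i:=t_i\alpha\in\Ac(G,A)$, if the set $J=\{i<\omega\mid s_i\neq 0\}$ were infinite I would enumerate it as $i_0<i_1<\cdots$ and pass to the cofinal subspectrum $(\{G_{i_k}\}_k,\{\gamma_{i_k,i_l}\}_{k<l})$; its colimit is still $(G,\{\gamma_{i_k}\}_k)$ with monomorphic structural morphisms, and the nonzero $s_{i_k}$ satisfy $s_{i_k}\gamma_{i_k}=0$. Condition (2) of Lemma~\ref{notM-comp} would then exhibit $G$ as not $A$-compact, contradicting the hypothesis; hence $J$ is finite and any $n>\max J$ satisfies $t_i\alpha=0$ for all $i\geq n$.

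The main potential obstacle is the colimit claim at the end of (1), namely that pullbacks commute with filtered colimits in $\Ac$. This is a direct consequence of exactness of filtered colimits in an Ab5-category via the equalizer-of-product presentation of the pullback, but it is the only non-formal ingredient; everything else is pullback bookkeeping and an appeal to Lemma~\ref{notM-comp}.
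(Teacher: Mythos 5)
Your proposal is correct and follows essentially the same route as the paper: the paper realizes $G_i$ as $\Ker(c_i\alpha)$ for $c_i$ the cokernel of $\mu_i$, which (since $\mu_i=\Ker(c_i)$) is exactly your pullback $G\times_M M_i$, and it likewise invokes exactness of filtered colimits in an Ab5 category to identify $\mathrm{colim}_i G_i$ with $G$. Part (2) is argued in both cases by feeding the morphisms $t_i\alpha$, which kill the $\gamma_i$, into Lemma~\ref{notM-comp} against the $A$-compactness of $G$.
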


\begin{proof} (1) If we denote by $c_i\in\Ac(M,T_i)$ the cokernel of $\mu_i$ and $\gamma_i\in\Ac(G_i,G)$ the kernel of $c_i\alpha$ for every $i<j<\omega$, then $\mu_i$ is the kernel of $c_i$ and by the universal property of the kernel, there exists a morphism $\alpha_i\in\Ac(G_i,M_i)$ such that the diagram with exact rows 
\begin{equation*}
\xymatrix{
0 \ar@{->}[r] &	G_i \ar@{->}[r]^{\gamma_i} \ar@{->}[d]_{\alpha_i} & G \ar@{->}[r]^{c_i\alpha}\ar@{->}[d]^{\alpha} & T_i \ar@{=}[d]\\
0 \ar@{->}[r] &	M_i  \ar@{->}[r]^{\mu_i}  & M \ar@{->}[r]^{c_i} & T_i \ar@{->}[r]  &0 } 
\end{equation*}
commutes for each $i<\omega$. 
Furthermore, if we construct morphisms $\gamma_{i,j}$, $i<j<\omega$  using the universal property of the kernels as in the proof of
Lemma~\ref{notM-comp} (1)$\Rightarrow$(2), then we get the following commutative diagram
\begin{equation*}
\xymatrix{
	G_i \ar@{->}[r]^{\gamma_i} \ar@{->}[d]_{\gamma_{i,j}} & G
	\ar@{=}[d] \\
	G_j  \ar@{->}[r]^{\gamma_{j}}  & G }
\end{equation*}
and checking that $(G,\{\gamma_i\}_{i<\omega})$ is a colimit of the $\omega$-spectrum $(\{G_i\}_{i<\omega}, \{\gamma_{i,j}\}_{i<j<\omega})$ of $G$ is easy.

(2) From (1) we have an $\omega$-spectrum $(\{G_i\}_{i<\omega}, \{\gamma_{i,j}\}_{i<j<\omega})$  with the colimit $(G,\{\gamma_i\}_{i<\omega})$ and morphisms 
$\alpha_i\in\Ac(G_i,M_i)$ such that the diagram 
\begin{equation*}
\xymatrix{
0\ar@{->}[r] &	G_i \ar@{->}[r]^{\gamma_i} \ar@{->}[d]_{\alpha_i} & G \ar@{->}[d]^{\alpha} 
	\\
0\ar@{->}[r]  &	M_i  \ar@{->}[r]^{\mu_i}  & M \ar@{->}[r]^{t_i} & A } 
\end{equation*}
commutes for every $i<\omega$. Since $t_i\alpha\gamma_i=t_i\mu_i\alpha_i=0$ by the hypothesis, Lemma~\ref{notM-comp} applied on the $A$-compact object $G$ and morphisms $t_i\alpha$, $i<\omega$, give us $n$ such that $t_i\alpha=0$ for all $i\ge n$.
\end{proof}

\begin{lm}\label{(A,B)}
Let $A$ and $B$ be objects of $\Ac$ and $\Ac(A,B)=0$.
If $\alpha\in \Ac(A\prod B,B)$ then there  exists $\tau\in\Ac(B,B)$ for which $\alpha=\tau\pi_B$.
\end{lm}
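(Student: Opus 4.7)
The plan is to exploit the fact that in an abelian category (and so in the Ab5 category $\Ac$) every finite product is canonically isomorphic to the corresponding coproduct, so $\prod\{A,B\}$ is a biproduct. I denote by $\nu_A,\nu_B$ the coproduct injections and by $\pi_A,\pi_B$ the product projections of this biproduct; by the standard biproduct identities (cf.\ axiom (1) of abelian categories together with Lemma~\ref{Morph}(2)) we have
\begin{equation*}
\pi_A\nu_A = 1_A,\quad \pi_B\nu_B = 1_B,\quad \pi_A\nu_B = 0,\quad \pi_B\nu_A = 0,\quad \nu_A\pi_A + \nu_B\pi_B = 1_{\prod\{A,B\}}.
\end{equation*}

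With this in hand, the natural candidate for $\tau$ is simply $\tau := \alpha\nu_B \in \Ac(B,B)$. The verification is a one-line computation: composing $\alpha$ with the biproduct decomposition of the identity gives
\begin{equation*}
\alpha \;=\; \alpha\circ\bigl(\nu_A\pi_A + \nu_B\pi_B\bigr) \;=\; (\alpha\nu_A)\pi_A + (\alpha\nu_B)\pi_B.
\end{equation*}
The first summand vanishes because $\alpha\nu_A$ is an element of $\Ac(A,B)=0$ by hypothesis, and therefore $\alpha = (\alpha\nu_B)\pi_B = \tau\pi_B$, as required.

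There is no real obstacle here: the assertion is a routine consequence of the Hom-vanishing hypothesis combined with the biproduct structure that is automatic in any abelian category. The only thing to be a little careful about is to invoke the biproduct identity $\nu_A\pi_A+\nu_B\pi_B = 1$, which is precisely the content of canonical isomorphism between finite products and coproducts in $\Ac$ (already recorded in the preliminaries). Uniqueness of $\tau$ is not claimed, but one can note in passing that $\tau$ is in fact uniquely determined: if $\alpha = \tau'\pi_B$, then $\tau' = \tau'\pi_B\nu_B = \alpha\nu_B = \tau$.
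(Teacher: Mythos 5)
Your proof is correct, and it rests on the same underlying idea as the paper's: since $\Ac(A,B)=0$, the morphism $\alpha$ kills the $A$-component of the biproduct, so it must factor through the projection onto $B$. The paper packages this by viewing $0\to A\stackrel{\nu_A}{\to}A\oplus B\stackrel{\rho_B}{\to}B\to 0$ as a short exact sequence, observing $\alpha\nu_A=0$, and invoking the universal property of the cokernel $\rho_B$ (then transporting along $A\oplus B\cong A\prod B$); you instead compute directly with the biproduct identity $\nu_A\pi_A+\nu_B\pi_B=1$, which yields the same factorization but with the explicit formula $\tau=\alpha\nu_B$ and, as a bonus, the uniqueness of $\tau$. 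The two arguments are equivalent --- the splitting of that exact sequence is exactly what makes your identity hold --- so the difference is one of presentation: yours is slightly more self-contained and constructive, the paper's defers to a universal property it uses repeatedly elsewhere.
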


\begin{proof} Since $0\to A \stackrel{\nu_A}{\longrightarrow}  A\oplus B \stackrel{\rho_B}{\longrightarrow} B\to 0$ is  a  short exact sequence and $\alpha\nu_A=0$, the claim follows from the universal property of the cokernel $\rho_B$ and by applying the canonical isomorphism $A\oplus B\cong A\prod B$.
\end{proof}

Recall that $G$ is a \emph{projective generator} of $\Ac$, if for any nonzero object $B$ in $\Ac$, $\Ac(G,B)\ne 0$ holds and for each pair of objects $A$, $B$, any epimorphism $\pi\in \Ac(A,B)$ and any morphism $\varphi\in \Ac(G,B)$ there exists $\tau\in \Ac(G,A)$ such that $\varphi=\pi\tau$.

The following assertion is a categorial version of \cite[Proposition 1.6]{Z08} (cf. also \cite[Corollary 1.3]{A-M}). Call an $\Ac$-compact object briefly \emph{compact} object.

\begin{thm}\label{prod} Let $\M$ be a family of objects,  $\Ac$ contain a compact projective generator and the product $M = \prod \M$. Denote $M_N=\prod(\M\setminus\{N\})$ and let $\Ac(M_N, N)=0$ for each $N$. Then $M$ is autocompact if and only if $N$ is autocompact for each $N\in\M$. 
\end{thm}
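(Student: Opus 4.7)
For the forward direction, I would use the biproduct decomposition $M \cong M_N \oplus N$ (since $\{M_N, N\}$ is a finite family, so its product and coproduct agree), which gives an epimorphism $\pi_N \in \Ac(M, N)$ and a monomorphism $\nu_N \in \Ac(N, M)$; Proposition~\ref{Autocompact4} then immediately yields autocompactness of $N$ from that of $M$.

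The backward direction is more substantial, and I would begin with the preparatory step of showing that $M$ is $N$-compact for each $N \in \M$. Suppose not; by Lemma~\ref{notM-comp} there is an $\omega$-spectrum with colimit $(M, \{\mu_i\}_{i<\omega})$ of monomorphisms and nonzero $\varphi_i \in \Ac(M,N)$ with $\varphi_i \mu_i = 0$. Since $M = M_N \oplus N$ and $\Ac(M_N, N) = 0$, Lemma~\ref{(A,B)} gives $\varphi_i = \tau_i \pi_N$ for some $\tau_i \in \End_\Ac(N)$, nonzero because $\pi_N$ is an epimorphism. Factoring each $\pi_N \mu_i$ through its image $\iota_i \colon I_i \hookrightarrow N$ produces an $\omega$-spectrum $(I_i, \iota_{i,j})$ whose colimit is $(N, \{\iota_i\})$ -- this holds because in an Ab5-category the union of the subobjects $\mathrm{im}(\mu_i)$ equals $M$ and $\pi_N$ is epi, so $\bigcup I_i = N$. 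Since $\tau_i \iota_i = 0$ while $\tau_i \ne 0$, Lemma~\ref{notM-comp} would force $N$ to fail to be autocompact, a contradiction.

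Next, by Theorem~\ref{criterion} it suffices to show that $M$ is $\bigoplus \Cc$-compact for every countable $\Cc \subseteq \M$. Via Theorem~\ref{N-comp} and the identification $(\bigoplus \Cc)^{(\omega)} \cong \bigoplus_{(k,N) \in \omega \times \Cc} N$, this reduces to proving that any morphism $\varphi \colon M \to \bigoplus_{i \in I} N_i$, with $I$ countable and each $N_i \in \Cc$, has finite support $S = \{i \in I : \rho_i \varphi \ne 0\}$. By Lemma~\ref{(A,B)}, $\rho_i \varphi = \tau_i \pi_{N_i}$ for some $\tau_i \in \End_\Ac(N_i)$, with $\tau_i \ne 0$ iff $i \in S$. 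Assuming $S$ infinite, there are two cases. If some $N \in \Cc$ has $\{i \in S : N_i = N\}$ infinite, then composing $\varphi$ with the projection onto the $N$-copies gives a morphism $M \to N^{(\omega)}$ of infinite support, contradicting the $N$-compactness of $M$ just established. Otherwise, pass to a subsequence with distinct $N_{i_k}$ and $\tau_{i_k} \ne 0$; since $G$ is a generator, pick $\beta_k \in \Ac(G, N_{i_k})$ with $\tau_{i_k} \beta_k \ne 0$, and since $M = \prod \M$, assemble a single $\alpha \in \Ac(G,M) = \prod_{N \in \M} \Ac(G,N)$ with $\pi_{N_{i_k}} \alpha = \beta_k$ for all $k$ and $\pi_N \alpha = 0$ for every other $N$. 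Then $\rho_{i_k} \varphi \alpha = \tau_{i_k} \beta_k \ne 0$ for all $k$, so $\varphi \alpha \colon G \to \bigoplus_i N_i$ has infinite support, contradicting the compactness of $G$.

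The most delicate ingredient is this last step: forcing a contradiction when $\varphi$ distributes its support across infinitely many distinct summands of $\bigoplus \Cc$. It exploits both properties of $G$ (compactness to obtain the finite-support conclusion for $\varphi \alpha$, and the generator property to produce the nonzero $\beta_k$), together with the full product structure of $M$ which allows prescribing $\pi_N \alpha$ independently at each coordinate -- it is precisely here that all hypotheses of the theorem are simultaneously tight.
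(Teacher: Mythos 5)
Your argument is correct, but it diverges from the paper's proof in both of its substantial steps, so a comparison is worthwhile. For the preparatory claim that $M$ is $N$-compact, the paper has a one-line argument you should note: $M\cong M_N\oplus N$, the hypothesis $\Ac(M_N,N)=0$ makes $M_N$ trivially $N$-compact (Example~\ref{ex1}), and $N$ is $N$-compact by assumption, so Lemma~\ref{sum}(1) (or Proposition~\ref{finite_sums}) finishes; your detour through image factorizations of $\pi_N\mu_i$ and the Ab5 fact that images commute with directed unions is valid but does work the paper avoids entirely. For the main step, the paper stays inside the $\omega$-spectrum framework of Lemma~\ref{notM-comp}: it forms the cokernels $T_i$ of $\pi_{N_i}\mu_i$, shows $T_i\ne 0$, lifts nonzero maps $G\to T_i$ along $\sigma_i$ using projectivity, assembles them into $\zeta\in\Ac(G,\prod\Cc)$, and contradicts compactness of $G$ via the transfer Lemma~\ref{subspectrum}(2). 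You instead invoke Theorem~\ref{criterion} to reduce to $\bigoplus\Cc$-compactness for countable $\Cc$, split according to whether the support concentrates on one summand (handled by $N$-compactness of $M$) or spreads over distinct summands (handled by building $\alpha\in\Ac(G,M)$ coordinatewise and contradicting compactness of $G$ directly). Your route is arguably cleaner in that it bypasses the cokernel construction and Lemma~\ref{subspectrum} altogether, at the price of leaning on Theorem~\ref{criterion} as a black box; the common core of both arguments is the same, namely using the full product structure of $M$ to prescribe a test morphism from $G$ independently at infinitely many coordinates. One small point to tighten: with the paper's definition of generator (only $\Ac(G,B)\ne 0$ for $B\ne 0$), the existence of $\beta_k$ with $\tau_{i_k}\beta_k\ne 0$ is not the generator property alone --- you need to factor $\tau_{i_k}$ through its image and use projectivity of $G$ to lift a nonzero map $G\to\Img\tau_{i_k}$ along the epimorphism $N_{i_k}\to\Img\tau_{i_k}$, exactly as the paper does with its $\sigma_i$ and $\zeta_i$.
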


\begin{proof} ($\Rightarrow$) Since $M\cong N\oplus M_N$ for every $N\in\M$, the assertion follows from Proposition~\ref{finite_sums}.

($\Leftarrow$) First note that $M_N$ is a trivial example of an $N$-compact module (cf. Example~\ref{ex1}), so $M$ is $N$-compact for every $N\in\M$ by Proposition~\ref{finite_sums}.

Assume that $M$ is not $M$-compact, hence by Lemma~\ref{notM-comp} there exists an $\omega$-spectrum $(\{M_i\}_{i<\omega}, \{\mu_{i,j}\}_{i<j<\omega})$  with the colimit $(M,\{\mu_i\}_{i<\omega})$ such that for each $i<\omega$  there exists a nonzero $\tilde{\varphi}_i\in \Ac(M,M)$ and $N_i\in\M$ for which $\tilde{\varphi}_i\mu_i=0$ and $\pi_{N_i}\tilde{\varphi}_i\ne 0$. Put $\varphi_i=\pi_{N_i}\tilde{\varphi}_i$ for each $i<\omega$ and $\Cc=\{N_i\mid i<\omega\}$ and note that there exist $\psi_i\in\Ac(N_i,N_i)$ satisfying $\psi_i\pi_{N_i}=\varphi_i$ by Lemma~\ref{(A,B)} applied on $M_{N_i}\prod N_i$ for each $i<\omega$.

If $\Cc$ is finite, then $M$ is $\prod\Cc$-compact by Proposition~\ref{sum-compact} applied on $\{M\}$ and $\Cc$, hence there exists $n$ such that $\varphi_n=\pi_{N_n}\pi_\Cc\varphi=\pi_{N_n}\varphi=0$, which contradicts the fact that $\varphi_i\ne 0$ for all $i<\omega$.

Thus $\Cc$ is infinite and we may assume w.l.o.g. that $N_i\ne N_j$ whenever $i\ne j$. Denote the cokernel of the composition $\pi_{N_i}\mu_i$ by $\sigma_i\in\Ac(N_i,T_i)$ for $i<\omega$. Then we have a commutative diagram
\begin{equation*}
\xymatrix{
0\ar@{->}[r] & M_i\ar@{->}[r]^{\mu_i} &	M \ar@{->}[r]^{\pi_{N_i}} \ar@{->}[d]_{\varphi_i} & N_i \ar@{->}[r]^{\sigma_i}\ar@{->}[d]^{\psi_i} & T_i \ar@{=}[r] \ar@{->}[d]^{\tilde{\mu}_{T_i}} & T_i \ar@{=}[d] \ar@{->}[r] & 0
	\\
&  &	N_i  \ar@{=}[r]  & N_i & \prod_j T_j \ar@{->}[r]^{\tilde{\pi}_{T_i}}  & T_i \ar@{->}[r] & 0 } 
\end{equation*}
 for each $i<\omega$, where $\tilde{\pi}_{T_i}$ and $\tilde{\mu}_{T_i}$ denote the structural and associated morphisms of the product $\prod_j T_j$. Since $\psi_i\pi_{N_i}\mu_i=\varphi_i\mu_i=0$ and $\psi_i\ne 0$, the morphism $\pi_{N_i}\mu_i$ is not an epimorphism and so $T_i\ne 0$. As $G$ is a  projective generator, there exists $\zeta_i\in \Ac(G,N_i)$ satisfying $\sigma_i\zeta_i\ne 0$ for each $i<\omega$. Then by the universal property of the product $\prod\Cc$, there is $\zeta\in \Ac(G,\prod\Cc)$ such that $\hat{\pi}_{N_i}\zeta=\zeta_i$, hence $\sigma_i\hat{\pi}_{N_i}\zeta=\sigma_i\zeta_i\ne 0$ for all $i<\omega$. 
If we define $t_i=\tilde{\mu}_{T_i}\sigma_i\pi_{N_i}$ and denote by $\mu_{\Cc}\in\Ac(\prod \Cc, M)$ the associated morphism, we can easily compute 
\[
t_i\mu_i=\tilde{\mu}_{T_i}\sigma_i\pi_{N_i}\mu_i=0\ \ \text{and}\ \ 
t_i\mu_{\Cc}\zeta=\tilde{\mu}_{T_i}\sigma_i\pi_{N_i}\mu_{\Cc}\zeta=\tilde{\mu}_{T_i}\sigma_i\zeta_i\ne 0
\]
as $\tilde{\mu}_{T_i}$ for $i<\omega$ is a monomorphism, which contradicts the hypothesis that $G$ is compact by Lemma~\ref{subspectrum}(2).
\end{proof}

The following example shows that the existence of the compact projective  generator cannot be removed from the assumptions of the last assertion.

\begin{exm}  Consider the category  of all torsion abelian groups $\mathcal{T}$ from Example~\ref{torsion}. Then $M=\bigoplus_{q\in\mathbb P}\Z_q$ is the product of the family $\{\Z_q\mid q\in\mathbb P\}$ and $M_p=\bigoplus_{q\ne p}\Z_p$ is the product of the family $\{\Z_q\mid q\in\mathbb P\setminus\{p\}\}$ for all $p\in\mathbb P$
in the category $\Tc$. Although $\Hom_\Tc(M_p,\Z_p)=0$ and $\Z_p$ is autocompact in $\Tc$ for each $p\in\mathbb P$, $M$ is not autocompact.
Let us remark that the category $\Tc$ contains no compact generator.
\cite[Corollary 1.8]{Z08}).
\end{exm}

We conclude with a well-known example of an autocompact product.

\begin{exm} Any finitely generated free abelian group is a compact projective  generator in the category of abelian groups and the family $\{\Z_q\mid q\in\mathbb P\}$ satisfies the hypothesis of Proposition~\ref{prod} by \cite[Lemma 1.7]{Z08}, hence $\prod_{q\in\mathbb P}\Z_q$ is autocompact (cf.\cite[Corollary 1.8]{Z08}).
\end{exm}

\def\bibname{Bibliography}

\end{document}